\theoremstyle{bams}
\newtheorem{thm}{Theorem}[section]
\newtheorem{cor}[thm]{Corollary}
\theoremstyle{bamsdefn}
\newcommand{\dia}{$\diamondsuit$ }
\newcommand{\diaa}{$\diamondsuit\!$}
\newcommand{\diabb}{$\blacklozenge\!$}
\newcommand{\whbox}{$\square$ }
\newcommand{\blbox}{$\blacksquare$ }
\newcommand{\blboxx}{$\blacksquare$\hskip -0.1 pt}
\begin{document}
\runningtitle{Short title for running head (top of right hand page)}
\title{PERFECT COLOURINGS OF ISONEMAL FABRICS BY THIN STRIPING}
\cauthor 
\author[1]{R.S.D.~Thomas}
\address[1]{St John's College and Department of Mathematics, University of Manitoba,
Winnipeg, Manitoba  R3T 2N2  Canada.\email{thomas@cc.umanitoba.ca}}

\authorheadline{R.S.D.~Thomas}



\begin{abstract}
Perfect colouring of isonemal fabrics by thin striping of warp and weft and the closely related topic of isonemal prefabrics that fall apart are reconsidered and their relation further explored. The catalogue of isonemal prefabrics that fall apart is extended to order 20 for those of even genus.
\end{abstract}

\classification{primary 52C20; secondary 05B45}
\keywords{fabric, isonemal, perfect colouring, prefabric, weaving}

\maketitle

\section{Introduction}
\noindent Except for a finite list of interesting exceptions, Richard Roth \cite{R1} has classified isonemal periodic prefabric designs into 39 infinite species falling into three more general classes as well as the previously defined genera \cite{C1}. 
Species 1--10 have reflection or glide-reflection symmetries with parallel axes and no rotational symmetry, not even half-turns. 
Species 11--32 have reflection or glide-reflection symmetries with perpendicular axes, hence half-turns, but no quarter-turns. 
Species 33--39 have quarter-turn symmetries but no mirror or glide-reflection symmetries. 
This taxonomy has been refined slightly and used in \cite{P1,P2,P3}, to which reference needs to be made, to determine the feasible symmetry groups and hence isonemal prefabrics.
As Roth observes beginning his subsequent paper \cite{R2} on perfect colourings, `[r]ecent mathematical work on the theory of woven fabrics' begins with \cite{ST}, which remains the fundamental reference.
In that paper Roth determines which fabrics---actually prefabrics---can be perfectly coloured by striping warp and weft.
This paper is intended to reconsider that topic in terms of Roth's taxonomy as refined in \cite{P1,P2,P3} and also to consider further the related question which of isonemal prefabrics fall apart.

A {\it prefabric,} as defined by Gr\"unbaum and Shephard \cite{IF}, consists of two or more congruent layers (here only two) of parallel strands in the same plane $E$ together with a preferential ranking or ordering of the layers at every point of $E$ that does not lie on the boundary of a strand.
The points not on the boundary of a strand are naturally arranged into what are called here {\it cells}, in each of which one strand is uppermost.
The (parallel) strands of each layer are perpendicular to those of the other layer, making the cells square.
These square cells are taken here to be of unit area.
The mathematical literature on weaving has concerned exclusively {\it periodic} arrangements in the plane in the standard two-dimensional sense explained by Schattschneider \cite{S1}.
There exists a non-unique finite region and two linearly independent translations such that the set of all images of the region, when acted upon by the group generated by these translations, reproduces the original configuration, which is assumed to be infinite in all directions for convenience.
Schattschneider gives the name {\it unit} to a smallest region of the plane having the property that the set of its images under this translation group covers the plane.
Such units are all of the same area, the {\it period}, but in general can be of a variety of shapes. 
Since our prefabric layers meet at right angles and the symmetry groups with which we shall be concerned here are all rotational ($p4$) or have axes of reflection or glide-reflection in only parallel or in two perpendicular directions, the {\it lattice units}, that is, period parallelograms whose vertices are all images of a single point under the action of the translation subgroup, can be either rectangular or rhombic (Figure 1).
\begin{figure}
\centering
\includegraphics{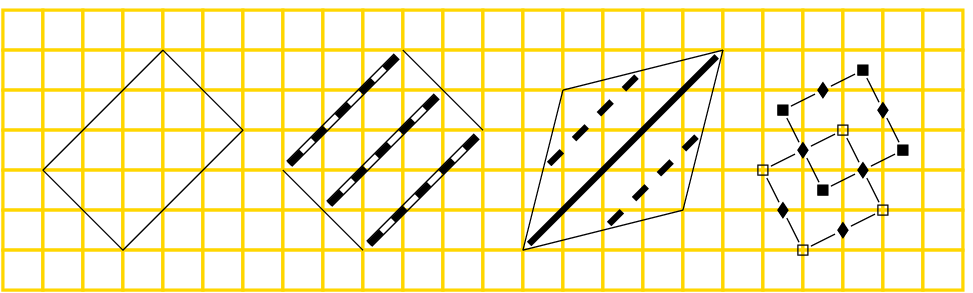}

(a) \hskip 0.75 in (b) \hskip 0.75 in (c) \hskip 0.75 in (d)
\vskip 5pt
\includegraphics{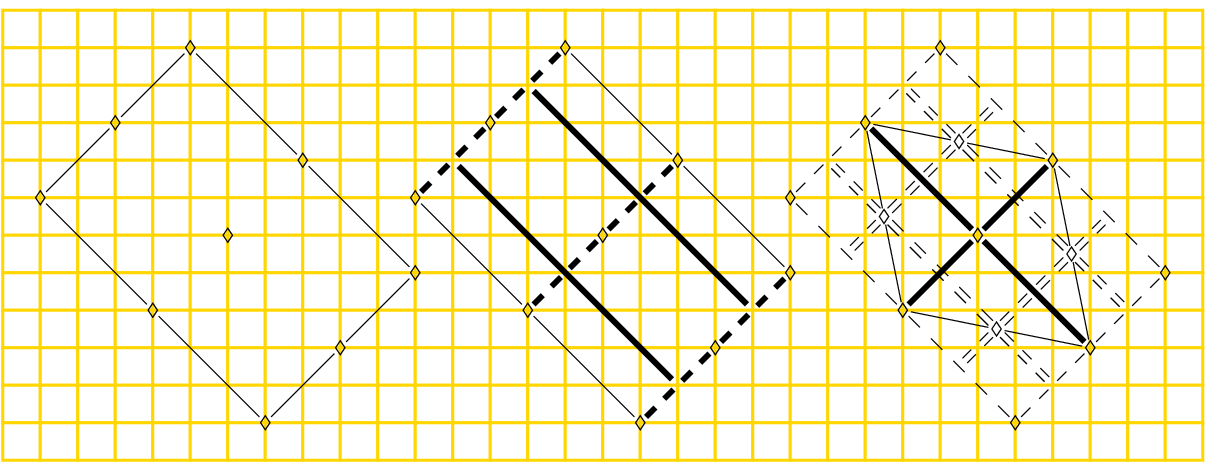}

(e)\hskip 1.5 in (f)\hskip 1.5 in (g)
\caption{The symbols for one lattice unit of some symmetry groups and side-preserving subgroups. \break
a. Side-preserving subgroup of crystallographic type $p1$ with an outline of no geometrical significance but its dimensions. \hskip 10 pt 
b. Symmetry group of type $pm$ with mirrors that become axes of glide-reflection in the side-preserving subgroup. \hskip 10 pt 
c. Type-$cm$ symmetry group with mirrors and axes of side-reversing glide-reflection alternating. \hskip 10 pt 
d. Type-$p4$ symmetry group with alternative lattice units outlined. Centres of quarter-turns without side reversal \whbox and with side reversal \blbox are illustrated. Also centres of side-reversing half-turns \diabb.\hskip 10 pt 
e. Side-preserving subgroup of type $p2$ with an outline of no geometrical significance through centres of half-turns \diaa. \hskip 10 pt 
f. Type-$pmg$ symmetry group with mirrors and side-reversing glide-reflections. \hskip 10 pt 
g. The rhombic lattice unit of a symmetry group of type $cmm$ with four centres of half-turns outside it marking the corners of the dashed outline of a lattice unit of the side-preserving subgroup of type $pgg$.}\label{fig1:}
\end{figure}
Some of the rectangles have one set of parallel boundaries defined by the group but the perpendicular boundaries arbitrary in position, only the distance between them being dictated by the group (e.g., Figures 1b and 6).
The notion of symmetry group allows the definition of the term isonemal; a prefabric is said to be {\it isonemal} if its symmetry group is transitive on the strands, whose directions are conventionally chosen to be vertical, called {\it warps}, and horizontal, called {\it wefts}.
The behaviour of each strand in a prefabric is, by isonemality, the same as the behaviour of every other strand; the periodicity in two dimensions entails periodicity in one dimension along each strand.
To distinguish the linear periodicity from the planar, the period along each strand will be called {\it order}.
Fabrics were catalogued by Gr\"unbaum and Shephard, sorted first under order, and then by binary index, which is the sequence of pale and dark cells of order length represented by 0s and 1s and chosen to be a minimum, then given an arbitrary sequence number.
The first catalogue \cite{C1} listed fabrics other than twills for orders 2 to 8 (plus 13) and an extension \cite{C2} likewise up to order 12 (plus 15 and 17).
Prefabrics that fall apart were catalogued by Hoskins and Thomas \cite{JA} for orders 4 to 16---numerically the same way but with an asterisk to indicate that they are not fabrics.

The standard way to represent the preferential ranking of the strands is to regard the plane $E$ as viewed from one side, from which viewpoint one or the other strand is visible in each cell.
By the {\it normal colouring} of warps dark and wefts pale, the visual appearance of the strands from a particular viewpoint becomes an easily understood code for which strand is uppermost.
This visual appearance from one side (arbitrarily chosen and called the {\it obverse}) of the strands we refer to as the {\it pattern}.
Such a pattern, which consists of an array of dark and pale congruent cells tessellating the plane is given a topological meaning, the {\it design} of the prefabric.
As we shall be considering patterns different from the design of a prefabric, the distinction is important; a design is the pattern of a prefabric that is normally coloured.
So a pattern can be interpreted as a design or not.
Normal colouring has a consequence that other colourings of the strands need not have.
Because at every point not on the boundary of a strand the two strands preferentially ranked are of different colours, a design's colour complement (switching dark and pale) is the appearance of the prefabric from behind as though viewed from the obverse side in a mirror set up behind it.
This we shall call the {\it reverse} of the prefabric.
When a prefabric is coloured normally the reflected reverse pattern is the colour complement of the obverse pattern.

The distinction between prefabrics and fabrics can now be explained; a {\it fabric} is a prefabric that {\it hangs together}, that is, that does not {\it fall apart} in the sense that some warps or some wefts or some of each can be lifted off the remainder because they are not bound into a coherent network by the interleaving defined by the preferential ranking.
The most extreme example of a prefabric that falls apart is the trivial prefabric in which all warps pass over all wefts (uniformly dark diagram) or all wefts pass over all warps (uniformly pale diagram).
In an obvious extension of the notation adopted in the catalogue of genuinely periodic isonemal prefabrics that fall apart \cite{JA}, the trivial prefabric would be denoted 1-0-1*.

For isonemal prefabrics other than his short list of exceptions, all of order less than 5, Roth showed that the symmetry group $G_1$ is a layer group with two-dimensional projection of crystallographic type $pg$, $pm$, $cm$, $pgg$, $pmg$, $pmm$, $cmm$, or $p4$, that is, it has respectively glide-reflection axes or mirrors or both in parallel directions or glide-reflection axes in perpendicular directions, or glide-reflection axes and axes of reflection in directions perpendicular to each other, or just perpendicular axes of reflection, or both (alternating) in both perpendicular directions or just quarter-turns and half-turns.
In order for a glide-reflection or quarter-turn to be a symmetry of the prefabric, it may or may not have to be combined with reversal of the sides of the prefabric, $\tau$, i.e., reflection in the plane $E$. 
Mirror symmetry must always be planar reflection combined with $\tau$ because any cell through which the mirror passes must be its own image in the symmetry but the reflection alone, reversing warp and weft, would reverse its conventional colour. So $\tau$ is needed to restore it.
This means that there is never mirror symmetry in the side-preserving subgroup $H_1$, which really is two-dimensional (no $\tau$). 
It is $H_1$ that determines the two-dimensional period under translation alone.
For these species of prefabric, $H_1$ is of type $p1$, which is generated by translations only, or $p2$, which is a group generated by half-turns only, or $pg$, $pgg$, or $p4$ already characterized.
When $H_1$ is of type $pg$, $pgg$, or $p4$ it may be the same group as $G_1$, or it may be a proper subgroup of $G_1$, or it may be just of the same type as $G_1$.

There are too many possible geometrical configurations of the symmetry groups and side-preserving subgroups just mentioned to illustrate their lattice units here.
Most of them are illustrated in \cite{P1,P2,P3}.
There is are tables in \cite{P3} of which figures in \cite{P1,P2,P3} illustrate which types of group.
Figure 1 illustrates a selection in order to show the conventions for the symmetry-group operations.
Glide-reflection axes without $\tau$ (side-preserving) are hollow dashed lines (making the central rectangle of Figure 1g) and with $\tau$ (side-reversing) are filled dashed lines (Figures 1c and f).
Mirrors (always with $\tau$) are filled double lines (Figures 1c, f, and g), but if the site of a mirror is also that of an axis of side-preserving glide-reflection the filling of the double lines is dashed (Figure 1b).
Axes of glide-reflection can run through the centres and corners of cells ({\it mirror position,} Figures 1b, 10, and 20) or through the sides of cells instead (where a mirror cannot go, Figure 1c).
Centres of half-turns are represented as \dia and centres of half-turns with $\tau$ as \diabb ; centres of quarter-turns as \whbox and centres of quarter-turns with $\tau$ as \blbox (Figure 7a).
Thin lines are just boundaries of lattice units, outlining or completing the outline of rectangles or rhombs.
The alternative lattice units outlined in Figure 1d are related by having the corners of each as centres of the other.
As the diagram illustrates with a group of Roth type 39 ($p4/p4$), the corners of the two sorts of lattice unit can be of different kinds.
Finally, in Figure 1g four centres of half-turns are illustrated outside the $G_1$ lattice unit to complete a lattice unit with dashed outline of a type-$p2$ $H_1$ subgroup inside the symmetry group $G_1$ of type $cmm$.
The subgroup is inside the group, but the lattice unit of the group is inside the lattice unit of the subgroup.
The longer and shorter diagonals of a rhomb will be called its {\it length} and {\it width}.

The subject matter now having been introduced, the plan of the paper can be given. 
Striping, thin and thick, is explained in \S 2, and which isonemal prefabrics can be thinly striped to be perfect colourings is determined in terms of Richard Roth's taxonomy.
In \S 3 it is determined which of these stripings must always be the design of an {\it isonemal} prefabric that falls apart.
The catalogue of designs of isonemal prefabrics that fall apart is extended to order 20 in \S 4.
Then in \S 5 it is shown that not all such designs can be produced by thinly striping an isonemal fabric, correcting an error in \cite{JA}.
Thick striping will be further considered elsewhere.
\section{Striping---Mostly Thin}
\noindent We turn now to the matter of perfect colourings of the strands of a prefabric with two colours, the subject of Roth's later weaving paper \cite{R2}.
An introduction to the colouring topic with emphasis on two colours is \cite{S2}.
Since the normal colouring of strands, which allows the visual pattern of the coloured strands (its design) to represent their topological structure, is a perfect colouring, this idea was more drawn attention to than introduced in \cite{JA}.
A symmetry operation is called a {\it colour symmetry} \cite{R2} `if it permutes the colors consistently'.
All the pale strands must be mapped either to pale strands or to dark strands, and correspondingly the dark strands.
If all of the symmetries of a prefabric with coloured strands are colour symmetries, then the choice of the strand colours is said to be {\it perfect} or {\it symmetric}, where I shall use exclusively the former term.
The interaction of design and pattern is important because the relevant symmetry group is that of the prefabric represented by the pattern that is its design, the group of the design for short, but the permutation (identity or reversal) of colours occurs only in the pattern that represents the colouring of the strands, identical to the design only with normal colouring of the strands.
                  There is potential for confusion.
Fortunately, since there are only two non-normal ways to colour a prefabric that can result in perfect colouring, it is easy to make this discussion concrete.
Warps and wefts can be striped, that is vary pale and dark, either {\it thinly}, that is alternately, or {\it thickly}, that is alternating in pairs: pale, pale, dark, dark, pale, pale.
Adapting a device from \cite{R2}, the colouring of a prefabric can be represented by seeming to extend strands outside the pattern to indicate which strands are pale or dark.
I adopt this convention as long as it is not completely obvious which strands are which.
\begin{figure}
\centering
\includegraphics{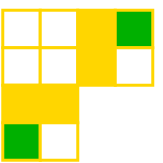}
\hskip 10 pt
\raisebox{22 pt}{\includegraphics{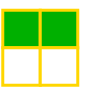}}\hskip 10 pt
\raisebox{22 pt}{\includegraphics{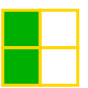}}\hskip 10 pt
\includegraphics{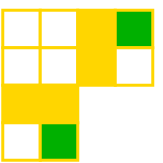}\hskip 10 pt
\raisebox{22 pt}{\includegraphics{2b.eps}}\hskip 10 pt
\raisebox{22 pt}{\includegraphics{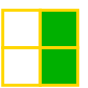}}

\hskip 12pt (a) \hskip 32 pt (b) \hskip 21 pt (c) \hskip 30 pt (d) \hskip 32 pt (e) \hskip 21 pt (f)

\caption{The trivial prefabric 1-0-1*.\hskip 10 pt 
a. Design and specification of one colouring by thin striping. \hskip 10 pt 
b. Front view (obverse) of colouring (a). \hskip 10 pt 
c. Rear reflected (reverse) view of colouring (a).\hskip 10 pt 
d. Specification of second colouring by thin striping. \hskip 10 pt 
e. Obverse view of colouring (d).\hskip 10 pt 
f. Reverse reflected view of colouring (d).}\label{fig2:}
\end{figure}

Consider the thin striping of the trivial prefabric 1-0-1*, whose design is all pale or all dark.
In Figures 2a and 2d, the design of 1-0-1* appears all pale with an indication of how the prefabric is to be coloured, to the left
of the resulting patterns 2b and 2e, where it is seen that the colouring of the invisible warps is not relevant,
and the patterns 2c and 2f, which are the appearances from behind as in a mirror held beyond the the plane of the prefabric.%
\footnote{One reason to look at mirror images of the reverse of the prefabrics is that it is much clearer which cells correspond to cells of the obverse.
Another is to preserve the handedness of patterns with handedness.}
On the reverse side, it is the colouring of the wefts that is irrelevant, but even this trivial example alerts one to the fact that the reverse is not just the colour reversal of the obverse.
Neither can be more important than the other, and so their systematic relation is of interest.
Where the crossing strands are of the same colour, as in the upper-left and lower-right cells of Figure 2a, both obverse and reverse have that colour.
Elsewhere the obverse colour is that of the design in pale rows (short for predominantly pale rows where the weft is pale) and its complement in dark rows (short for predominantly dark rows where the weft is dark).
The thin striping creates a checkerboard of cells that may be called redundant and irredundant, where the {\it redundant} are those where a colour meets itself and the {\it irredundant} are those where the warp and weft have different colours.
The appearance in irredundant cells is dependent on the design and colouring, in redundant cells only on the colouring.
In this language, the irredundant cells on the reverse have the complement of the design in pale rows and the colour of the design in dark rows, in both cases the complement of the obverse.
\begin{figure}
\centering
\includegraphics{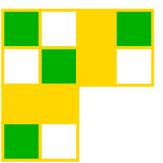}
\hskip 10 pt
\raisebox{22 pt}{\includegraphics{2b.eps}}\hskip 10 pt
\raisebox{22 pt}{\includegraphics{2c.eps}}\hskip 10 pt
\includegraphics{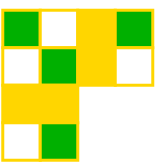}\hskip 10 pt
\raisebox{22 pt}{\includegraphics{2f.eps}}\hskip 10 pt
\raisebox{22 pt}{\includegraphics{2b.eps}}

\hskip 12pt (a) \hskip 32 pt (b) \hskip 21 pt (c) \hskip 30 pt (d) \hskip 32 pt (e) \hskip 21 pt (f)
\caption{Plain weave.\hskip 10 pt 
a. Specification of one colouring by thin striping. \hskip 10 pt b. Obverse view of colouring (a). \hskip 10 pt 
c. Reverse view of colouring (a).\hskip 10 pt 
d. Specification of second colouring by thin striping. \hskip 10 pt 
e. Obverse view of colouring (d).\hskip 10 pt 
f. Reverse view of colouring (d).}\label{fig3:}
\end{figure}
Consider the thin striping of the warp and weft of plain weave.
Because of the interlacing, the different colourings have different results in Figures 3b, c, e, f, where again the reverse is shown on the right.
But results are the same three as those for the trivial prefabric in Figure 2, indicating general non-uniqueness of the results of striping strands.

This striping illustrates that the patterns obtained by striping warp and weft of isonemal fabrics are the designs of prefabrics that fall apart \cite[Lemma 3]{JA}.
In consequence of this fact together with the known conditions for isomenal fabrics that fall apart \cite{CRJC,WD}, if the pattern of a fabric obtained by striping warp and weft is the design of an isonemal prefabric, then the prefabric is of genus II, IV, or V with no overlap with genus I or III and with one quarter of the cells in half the rows dark.
The pattern arising from the striping of warp and weft of an isonemal fabric does not need to {\it be} the design of an isonemal prefabric as we shall see.

The effect of striping warp and weft thickly is to produce a checkerboard of redundant cells that is plain weave doubled (box weave 4-3-1).

If there is to be any hope of perfect colouring, then the colouring, stri\-ping, must be chosen so that the colour symmetries of the fabrics map redundant cells to redundant cells and irredundant cells to irredundant cells, or as Roth puts it `preserve' them.
Which half of the cells are to be reduntant and which half are to be irredundant is a choice to be made.
There are therefore two ways to stripe the same fabric thinly.
Preserving the two classes of cell rules out as symmetries for any striping, 

\noindent glide-reflections with axes not in mirror position, 

\noindent translations $(x, y)$ with $x$ and $y$ not integers of the same parity, and 

\noindent half-turns with centres not at the centre or corner of a cell 

\noindent but not half-turns with centres in those two positions, not other translations and not mirror symmetries.
It rules out, for thin striping, quarter-turns with centres not at the centre of a cell.
Roth has shown \cite{R2} that these modest necessary conditions are also sufficient to allow the two sorts of striping, but not in terms of his symmetry-group types.

When we turn to species without quarter-turns, the glide-reflection constraint alone eliminates species $1_e$, $1_o$, $2_e$, $2_o$, 4, $8_o$, 10, 12, 14, 16, $18_o$, $18_e$, 20, 24, $28_o$, and 32 from being perfectly coloured by striped strands.
Perfect colouring by thin striping is possible for the remainder of the species with only parallel axes of symmetry: $1_m$, $2_m$, 3, $5_o$, $5_e$, 6, $7_o$, $7_e$, $8_e$, and 9.
As Roth points out, types with $G_1$ of type $pgg$ can have strands thinly striped to produce perfect colouring provided that twice each glide is an even multiple of the cell diagonal $\delta$. 
This is always the case when its axis is in mirror position.
As observed in \cite{P2}, the position of glide-reflection axes and the size of glides are related in such a way that it is unsurprising that the glide-reflections eliminated on the basis of position, those in species 12, 14, and 16, also have glides fractional in $\delta$.
Accordingly, species 11, 13, and 15 are always stripable.
Roth also points out that designs with $G_1$ of type $pmg$ allow thin striping with the same constraint on glides, since reflection is always a colour symmetry.
This permits thin striping for species 17, $18_s$ (s for stripable thinly), 19, 21, 22, and 23, but not $18_o$, $18_e$, 20, and 24, already banned.
A similar blanket permission is for $G_1$ of type $pmm$, that is types 25 and 26, since they have only reflections.
When $G_1$ is of type $cmm$, the stripability depends on the translations generated by the mirrors and glide-reflections, which are correctly located except in species 32, already banned.
The translations are the diagonals of the central rectangle as in Figure 1f, and their components must have the same parity. 
Both the odd-odd and even-even spacings of Roth type 27, which are used also in species 29 and 30, and the odd-even spacing of species $28_e$ and $28_n$, the former used again in species 31, allow the use of striping.
So we have seen what can be stated as a theorem, essentially proved in \cite{R2}.

\begin{thm}
Isonemal periodic prefabrics of order greater than $4$ and of every species with symmetry axes that is not ruled out by the placement of glide-reflections can be perfectly coloured by thin striping: $1_m, 2_m, 3, 5_o, 5_e, 6, 7_o, 7_e, 8_e$, $9, 11, 13, 15, 17, 18_s, 19, 21$--$23, 25$--$27, 28_e, 28_n$, and $29$--$31.$
\end{thm}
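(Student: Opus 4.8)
The plan is to reduce the theorem to the necessary-and-sufficient criterion that a thin striping of an isonemal prefabric yields a perfect colouring if and only if every symmetry of the design preserves the checkerboard partition of cells into redundant and irredundant classes. This criterion is due to Roth \cite{R2} and is summarized above in the list of admissible versus inadmissible operations: translations with components of equal parity, half-turns centred at cell centres or corners, and mirrors are always admissible; glide-reflections are admissible precisely when their axes lie in mirror position (equivalently, twice each glide is an even multiple of the cell diagonal $\delta$); and quarter-turns are admissible only when centred at a cell centre. Since each listed species is specified by the crystallographic type of its symmetry group $G_1$ together with the location and glide length of its glide-reflection axes, the proof proceeds group-type by group-type, checking in each case that no inadmissible operation occurs. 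The restriction to order greater than $4$ serves only to place us inside the $39$-species classification, away from Roth's finite list of exceptions.

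First I would dispose of the species with only parallel axes, namely $1_m, 2_m, 3, 5_o, 5_e, 6, 7_o, 7_e, 8_e$, and $9$, whose groups are of type $pg$, $pm$, or $cm$. For these the sole operation that could break cell preservation is a glide-reflection whose axis is off mirror position; but by hypothesis these are exactly the species not eliminated on that ground, so every symmetry is admissible and the criterion holds automatically. Next I would handle the types carrying half-turns but no quarter-turns. For $G_1$ of type $pgg$ (species $11, 13, 15$) the decisive point, already noted following \cite{P2}, is that the admissible placement of the glide axes forces twice each glide to be an even multiple of $\delta$; the associated translations then have components of equal parity and the half-turns fall at cell centres or corners, so nothing inadmissible survives. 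For $G_1$ of type $pmg$ (species $17, 18_s, 19, 21$--$23$) and of type $pmm$ (species $25, 26$) I would invoke the fact that reflections are always colour symmetries, together with the same glide constraint, to rule out any inadmissible operation.

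The step I expect to demand the most care is the type-$cmm$ case, covering species $27, 28_e, 28_n$, and $29$--$31$, where mirrors and glide-reflections in two perpendicular directions jointly generate translations along the diagonals of the central rectangle of Figure~1f. Here stripability is not automatic: it turns on whether those diagonal translations have components of the same parity. The task is to verify, for each of the admissible spacings---the odd-odd and even-even spacings of Roth type $27$ (reused in species $29$ and $30$) and the odd-even spacing of $28_e$ and $28_n$ (reused in species $31$)---that the generated translations genuinely satisfy the parity condition, so that the translation subgroup preserves the redundant/irredundant checkerboard, and to confirm that the half-turns forced by the two mirror directions land at cell centres or corners. Carrying out this parity bookkeeping, and assembling it with the earlier cases, shows that each listed species admits a thin striping all of whose symmetries are colour symmetries, which is exactly the assertion of the theorem.
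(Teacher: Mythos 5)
Your proposal is correct and follows essentially the same route as the paper, which states the theorem as ``essentially proved in \cite{R2}'' and whose argument is precisely the preceding species-by-species check of Roth's cell-preservation criterion: parallel-axis types cleared by the glide-axis placement, $pgg$ via the even-glide condition, $pmg$ and $pmm$ via the automatic admissibility of reflections, and $cmm$ via the parity of the diagonal translations. The only slight overstatement is treating ``axis in mirror position'' as \emph{equivalent} to ``twice the glide is an even multiple of $\delta$''; the paper asserts only that the former implies the latter and observes (following \cite{P2}) that the two eliminations happen to coincide for the species concerned, but this does not affect the conclusion.
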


\begin{thm}
Isonemal periodic prefabrics of order greater than $4$ with quarter-turn symmetry can be perfectly coloured by thin striping if and only if they are of species $36_s$.
\end{thm}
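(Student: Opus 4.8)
The plan is to test each of the seven quarter-turn species against the necessary-and-sufficient conditions for thin striping recorded in \S 2. Since species 33--39 carry no mirror or glide-reflection symmetry, the conditions on glides and reflections are vacuous, and only three can fail: every translation $(x,y)$ must have $x$ and $y$ integers of the same parity, every half-turn centre must lie at the centre or a corner of a cell, and every quarter-turn centre must lie at the centre of a cell. The whole question therefore reduces to locating the rotation centres and the translation lattice of each species relative to the cell grid, and I would organise the proof around that.

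First I would record the parity computation that makes the three conditions transparent. Placing cell centres at the integer points and cell corners at the half-integer points, the redundant/irredundant checkerboard is the partition by $i+j \bmod 2$. A quarter-turn about a cell centre preserves this parity class and so fixes the checkerboard, whereas a quarter-turn about a cell corner exchanges the two classes and hence cannot be a colour symmetry of either striping; a half-turn about a centre or a corner preserves it; and a translation preserves it exactly when its components share parity. Because perfect colouring demands that \emph{every} symmetry of the design be a colour symmetry, and a $p4$ design has two translation-inequivalent classes of $4$-fold centre, it is crucial that \emph{both} classes be seated at cell centres.

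Next I would bring in the geometry of $p4$. The two classes of $4$-fold centre---the side-preserving and the side-reversing ones---differ by half a lattice diagonal $\frac12(t_1+t_2)$, and there is one class of $2$-fold centre at $\frac12 t_1$. Writing the square generator as $t_1=(a,b)$, the second $4$-fold class sits at $\frac12(a-b,\,a+b)$ relative to the first, so both land on cell centres precisely when $a\equiv b \pmod 2$. This single parity condition simultaneously makes the generating translations admissible and forces the $2$-fold centres to a cell centre or corner. Admissibility for thin striping thus comes down to two requirements: the lattice generators have same-parity components, and the design registers one (hence both) $4$-fold centre at a cell centre rather than a corner.

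Finally I would run these two requirements through Roth's taxonomy as refined in \cite{P1,P2,P3}, reading off the centre positions and lattice data of species 33--39 from the figures and tables there. The expectation is that for species 33, 34, 35, 37, 38, and 39 some $4$-fold centre is forced onto a cell corner, or the lattice generators have opposite parity, or a $2$-fold centre falls at an edge midpoint, in each case reversing the checkerboard so that no thin striping is perfect; while species 36 separates into the admissible registration, named $36_s$, and inadmissible ones. For $36_s$ both $4$-fold classes sit at cell centres and the generators share parity, so all three conditions hold and the sufficiency half of Roth's result yields a perfect thin striping. I expect the main obstacle to be exactly this per-species bookkeeping---pinning down the centre-to-cell registration for all seven species and isolating the sub-case $36_s$---rather than any isolated calculation.
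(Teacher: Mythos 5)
Your overall strategy---reduce everything to the necessary-and-sufficient conditions of \S 2 (quarter-turn centres at cell centres, half-turn centres at cell centres or corners, translations with same-parity components), note that the reflection/glide conditions are vacuous for species 33--39, and then check the placement of rotation centres and the lattice in each species against the refined taxonomy---is sound, and your parity computation locating the second class of $4$-fold centres at $\frac12(t_1+t_2)$ and deriving $a\equiv b\pmod 2$ is a correct elaboration of why the conditions hang together. The paper, however, disposes of the theorem in one line: the subscript $s$ in $36_s$ is \emph{defined} in the refined taxonomy of \cite{P1,P2,P3} precisely so that the type specification of $36_s$ coincides with Roth's thin-striping conditions for a $p4$ group, so the theorem is essentially a definitional observation (the paper even remarks that this theorem is ``a second reason for the choice of subscript''). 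Your route buys an explicit geometric understanding of what the specification $36_s$ encodes, which the paper's proof leaves entirely to the cited taxonomy; what it costs is that the decisive step---the per-species verification for 33, 34, 35, 37, 38, 39 and the isolation of the admissible registration within species 36---is stated only as ``the expectation is that\ldots'' rather than carried out, so as written your argument is a plan for a proof rather than a proof. To close it you would either have to execute that bookkeeping from the tables in \cite{P3} or, as the paper does, observe that the taxonomy has already packaged the answer into the definition of $36_s$.
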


\begin{proof} The type specifications $36_s$ correspond exactly to the conditions for thin striping of a prefabric with $p4$ symmetry. (This theorem is a second reason for the choice of subscript.)
\end{proof}

\section{Producing isonemal prefabrics that fall apart}
\noindent It is a curiosity that the only perfect colourings illustrated in \cite{R2} are normally coloured designs.
It is surely of interest to see what non-normal perfect colourings look like.
To satisfy this curiosity, fabrics of each of the species with only parallel axes of symmetry will be described or displayed perfectly coloured by thin striping.

The fabrics shown in Figures 12a, 14c, and 2a of \cite{P1} to illustrate species $2_m$, $5_o$, and $5_e$ respectively become non-isonemal designs both ways they are striped thinly.
The two ways are illustrated for each example in Figure 4a to f.
\begin{figure}
\centering
\includegraphics{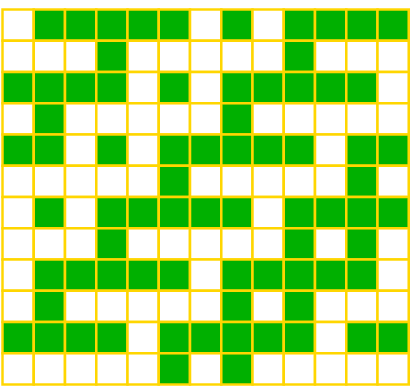}\hskip 10 pt
\includegraphics{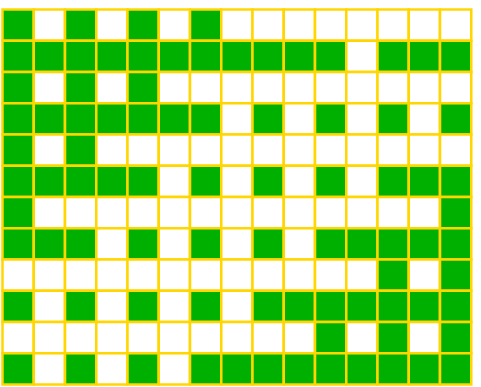}

\noindent (a)\hskip 117 pt (b)

\vspace {4 pt}
\noindent
\includegraphics{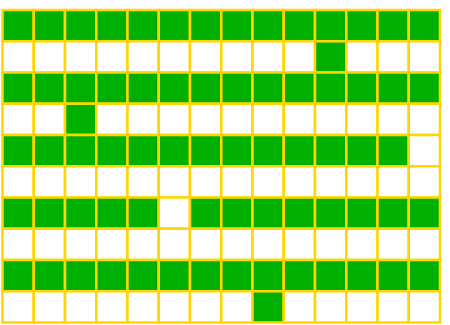}\hskip 5 pt
\includegraphics{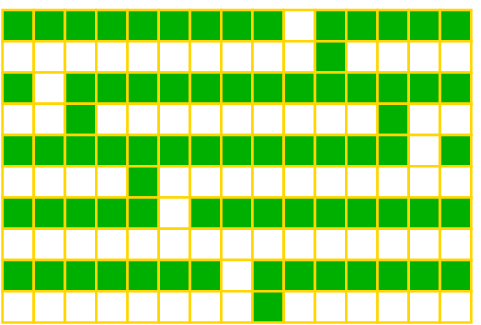}

\noindent (c)\hskip 122 pt (d)

\vspace {4 pt}
\includegraphics{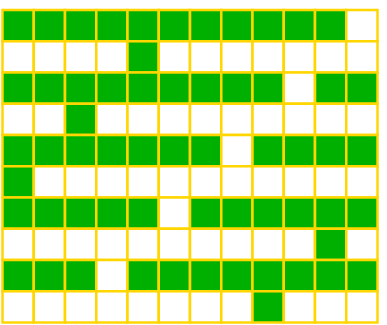}\hskip 10 pt
\includegraphics{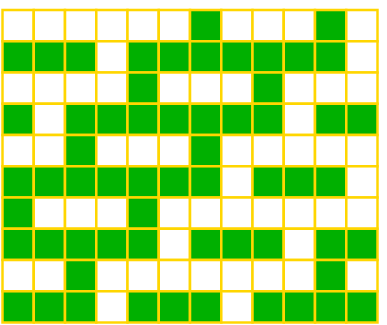}

\noindent (e)\hskip 109 pt (f)
\caption{Patterns (non-isonemal designs) resulting from both ways of colouring with thin striping.
a, b. Example of species $2_m$ in Figure 12a of \cite{P1}. 
\hskip 10 pt c, d. Example of species $5_o$ in Figure 14c of \cite{P1}. 
\hskip 10 pt e, f. Example (12-35-1) of species $5_e$ in Figure 2a of \cite{P1}.}\label{fig4:}
\end{figure}

Fabric 12-79-1 (Figure 8a of \cite{P1}), showing species 3, becomes 12-65-4* when striped one way and a non-isonemal design the other way (Figure 5a). 
Fabric 8-11-1 (Figure 6a of \cite{P1}), showing species 6, becomes 8-5-3* when striped one way and a non-isonemal design the other way (Figure 5b). 
Fabric 8-19-5 (Figure 17a of \cite{P1}), showing species $8_e$, becomes 4-1-1* when striped one way and a non-isonemal design the other way (Figure 5c). 
Fabric 8-11-2 (Figure 7a of \cite{P1}), showing species 9, becomes 8-5-1* when striped one way and a non-isonemal design the other way (Figure 5d).
\begin{figure}
\centering
\includegraphics{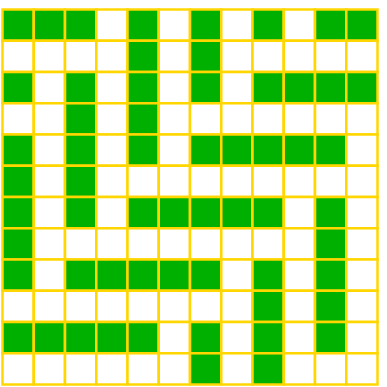}\hskip 10 pt
\includegraphics{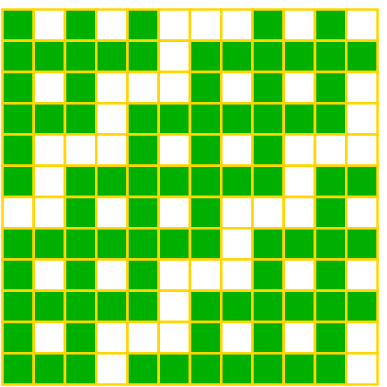}

(a)\hskip 108 pt (b)
\vskip 5pt
\includegraphics{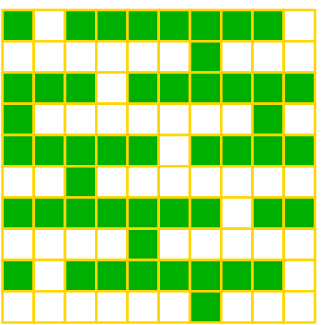}\hskip 10 pt
\includegraphics{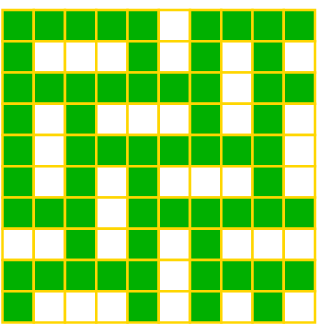}

(c)\hskip 90 pt (d)
\caption{Patterns (non-isonemal designs) resulting from colouring with thin striping.
a. 12-79-1 (species 3). 
\hskip 10 pt b. 8-11-1 (species 6).
\hskip 10 pt c. 8-19-5 (species $8_e$). 
\hskip 10 pt d. 8-11-2 (species 9).}\label{fig5:}
\end{figure}

Fabric 12-183-1 in Figure 4a of \cite{P1}, showing species $1_m$, becomes 12-69-2* or 12-21-2* when thinly striped.
The fabrics in Figures 15a and 16a of \cite{P1}, showing species $7_o$ and $7_e$ respectively (Figures 10a, b, here) both become designs of isonemal prefabrics that fall apart when thinly striped either way (Figures 6a and b, c and d, respectively).
\begin{figure}
\centering
\includegraphics{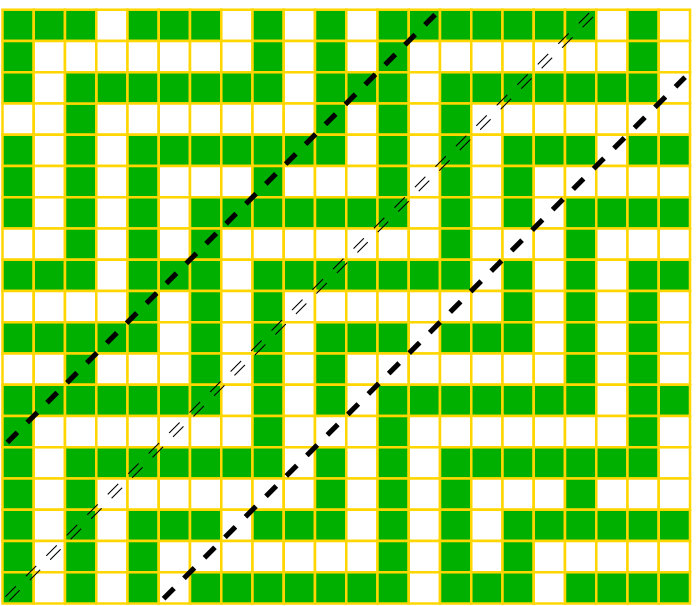}

(a)
\vskip 5pt
\includegraphics{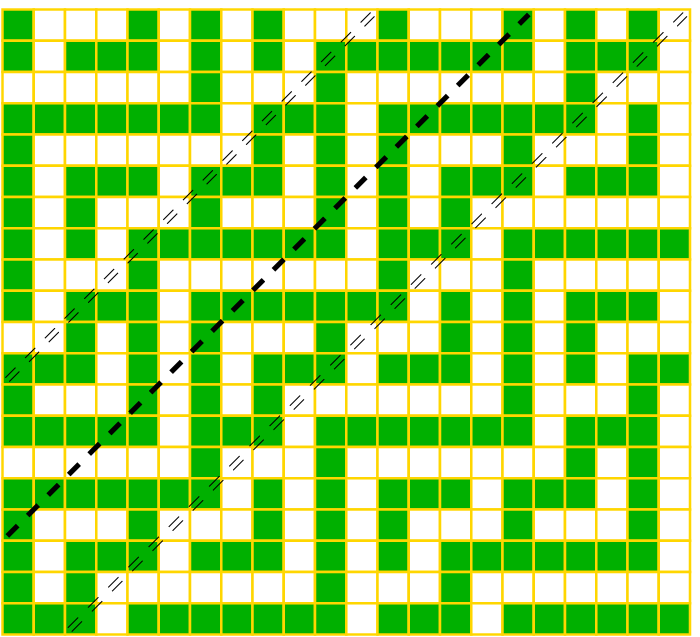}

(b)
\vskip 5pt
\includegraphics{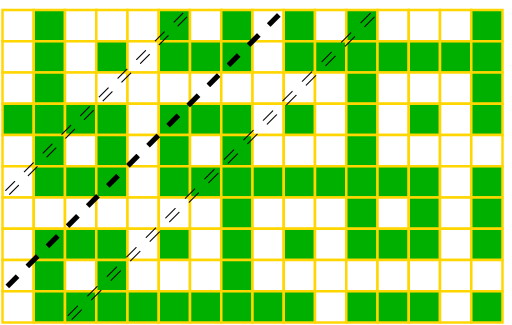}\hskip 5 pt
\includegraphics{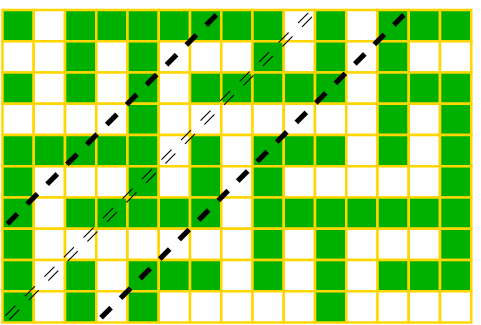}

(c)\hskip 138 pt (d)
\caption{Patterns (species-3 isonemal designs) resulting from two ways of colouring fabrics of species 7 with thin striping.
a, b. Fabric of species $7_o$ in Figure 10a.\hskip 10 pt 
c, d. Fabric of species $7_e$ in Figure 10b.}\label{fig6:}
\end{figure}

What is perhaps most obvious in these examples, in particular Figures 4 and 5, is that there is no tendency to the production of designs of {\it isonemal} prefabrics that fall apart.
More generally, the strand striping rather often produces stripy patterns that are unattractive given isonemal expectations.
This is even true for fabrics with $p4$ symmetry, of which only one species can be perfectly coloured by thin striping.
The example that Roth mentions of a design of species $36_s$ is 10-1-1, the $(10, 3)$ satin. 
It is illustrated, with its two thin stripings, in Figures 7 and 8.
\begin{figure}
\centering
\noindent
\includegraphics{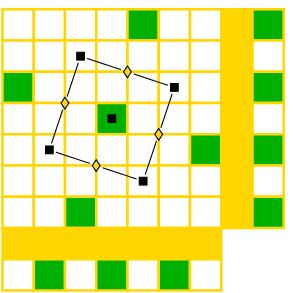}\hskip 10 pt
\raisebox{18 pt}{\includegraphics{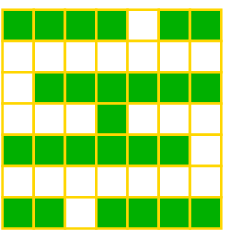}\hskip 10 pt\includegraphics{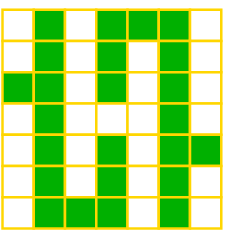}}

\noindent (a)\hskip 82 pt (b)\hskip 60 pt (c)

\caption{Specification of one colouring by thin striping of a fabric of species $36_s$, 10-1-1, the $(10, 3)$ satin with symmetries noted.\hskip 10 pt a. Specification.\hskip 10 pt
b. Obverse.\hskip 10 pt 
c. Reverse.}\label{fig7:}
\end{figure}
\begin{figure}
\centering
\noindent
\includegraphics{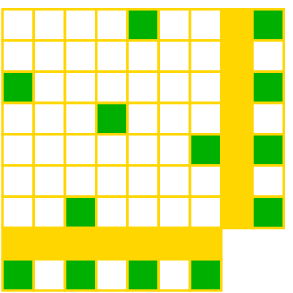}\hskip 10 pt
\raisebox{18 pt}{\includegraphics{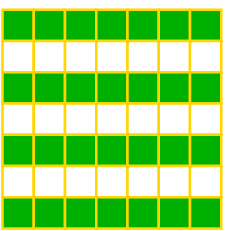}\hskip 10 pt\includegraphics{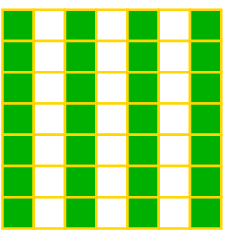}}

\noindent (a)\hskip 82 pt (b)\hskip 60 pt (c)
\caption{a. Specification of the other colouring of the fabric of Figure 7a by thin striping.\hskip 10 pt
b. Obverse.
\hskip 10 pt c. Reverse.}\label{fig8:}
\end{figure}
One notes that Figures 7 and 8 emphasize that \blbox means rotate {\it and reflect;} without the $\tau$ the pattern is changed quite seriously by a quarter turn, as those of Figures 4 and 5 are by their symmetries, which include $\tau$, if $\tau$ is forgotten.

The predominance of stripiness persists in species 11--32, where a similar analysis of examples illustrated in \cite{P2} gives the following results.
The example of species $18_s$ in Figure 9b of \cite{P2} (12-135-1),
of species 26 in Figure 4b of \cite{P2} (12-619-1),
of species 27 in Figure 12a of \cite{P2} (6-1-1),
of species $28_e$ in Figure 13a of \cite{P2} (8-1-1, the $(8, 3)$ satin),
of species 29 in Figure 15a of \cite{P2} (16-2499 of \cite{R1}),
and of species 31 in Figure 15b of \cite{P2} (order 24),
when thinly striped, have patterns neither of which is the design of an isonemal prefabric.
This behaviour is analogous to that illustrated in Figure 4.
These are just examples and not chosen for this purpose; it is quite possible for other examples to behave differently.

The largest class of examples are the next.
The example of species 13 in Figure 7a of \cite{P2}, becomes the design of an unknown order-24 isonemal prefabric when striped thinly one way and the design of a non-isonemal prefabric when striped thinly the other way.
The same is true of further examples as follows, where the isonemal result is indicated in parentheses.

\noindent Species 15 in Figure 8a of \cite{P2} (12-47-5 $\rightarrow$ 12-21-2*).

\noindent Species 17 in Figure 9a of \cite{P2} (8-19-7 $\rightarrow$ 4-1-1*).

\noindent Species 19 in Figure 9c  of \cite{P2} (8-19-4 $\rightarrow$ 4-1-1*).

\noindent Species 21 in Figure 10a  of \cite{P2} (8-7-2 $\rightarrow$ 8-5-1*).

\noindent Species 23 in Figure 11b of \cite{P2} (12-31-1 $\rightarrow$ 12-21-1*).

\noindent Species 25 in Figure 2a of \cite{P2} (8-19-6 $\rightarrow$ 4-1-1*).

\noindent Species $28_n$ in Figure 13b of \cite{P2} (8-5-1 $\rightarrow$ 8-5-3*).

Examples of designs of only three species produce designs of isonemal prefabrics when striped each way, those of species
11 in Figure 5a of \cite{P2}, 
species 22 in Figure 11a of \cite{P2}, and 
species 30 in Figure 16 of \cite{P2}. 
They are illustrated in Figure 9.
\begin{figure}
\centering
\noindent
\includegraphics{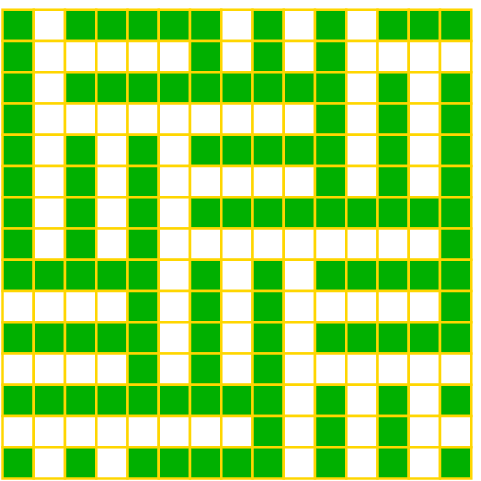}\hskip 10 pt\includegraphics{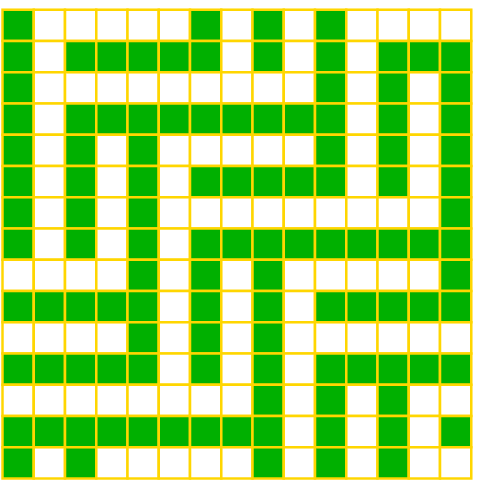}

\noindent (a) \hskip 133 pt (b)

\smallskip\noindent
\includegraphics{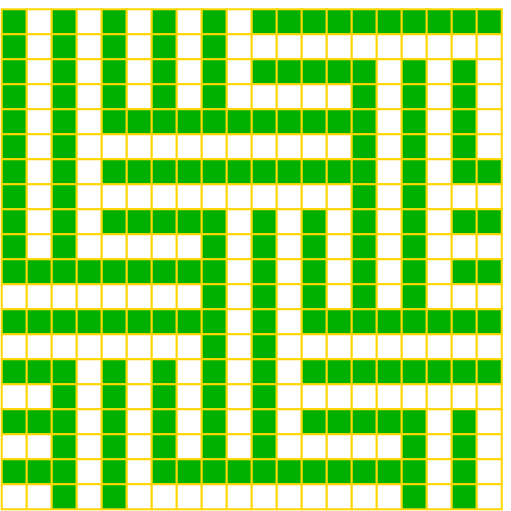}\hskip 5 pt\includegraphics{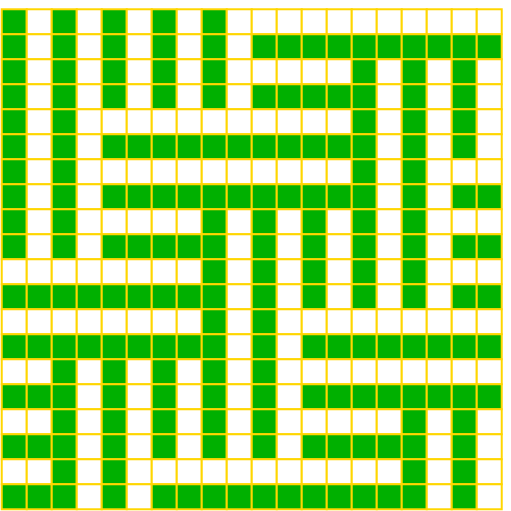}

\noindent (c) \hskip 137 pt (d)

\smallskip\noindent
\includegraphics{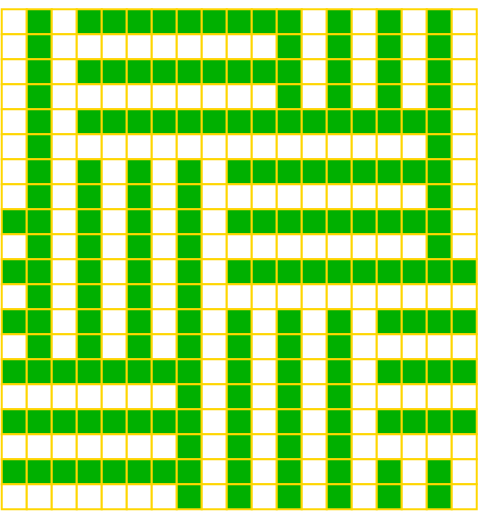}\hskip 10 pt\includegraphics{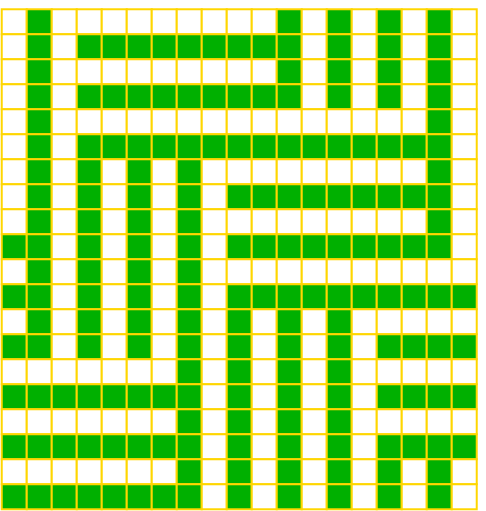}

\noindent (e)\hskip 138 pt (f)
\caption{Isonemal designs resulting from both ways of colouring with thin striping.
a, b. Example of species 11 in Figure 5a of \cite{P2}. 
\hskip 10 pt c, d. Example of species 22 in Figure 11a of \cite{P2}. 
\hskip 10 pt e, f. Example of species 30 in Figure 16 of \cite{P2}.}\label{fig9:}
\end{figure}
These patterns, all of species 15, would be mere curiosities if they did not illustrate a general fact.
And they {\it are} just examples.
(Other examples would give different indications, some of them unhelpful.
Species $28_n$ could have been illustrated by 8-27-5, which becomes 4-1-1* and 8-5-3* when striped both ways.)
But {\it all} fabrics of species 11, 22, and 30 produce designs of isonemal prefabrics when striped both ways, as do those of species $1_m$ and species 7 from the species with only parallel axes.
The examples given above show that {\it only} those species always do so.

\begin{thm}{The pattern of a fabric of order greater than $4$ perfectly coloured by thinly striping strands is the design of an isonemal prefabric that falls apart if the side-preserving subgroup of the fabric's symmetry group is generated by side-preserving glide-reflections and is transitive on strands.}
\end{thm}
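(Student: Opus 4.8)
The plan is to separate the two assertions in the statement. That the pattern is the design of a prefabric which \emph{falls apart} needs no new argument: since $H_1\subseteq G_1$ is transitive on strands so is $G_1$, whence the fabric is isonemal, and \cite[Lemma~3]{JA} then guarantees that any pattern obtained by thinly striping an isonemal fabric is the design of a prefabric that falls apart. All the work therefore goes into showing that this prefabric is itself \emph{isonemal}, i.e.\ that the symmetry group of the striped pattern is transitive on strands. To do this I would follow each side-preserving glide-reflection generating $H_1$ through the striping and show that it reappears as a symmetry of the new prefabric carrying strands to strands in exactly the same way.

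First I would fix the geometry. Because $H_1$ is generated by glide-reflections and is nonetheless transitive on strands, its generators must interchange the warp and weft directions---a glide-reflection with axis along a strand direction cannot mix warps with wefts---so their axes run at $45^\circ$, and in type $pgg$ both perpendicular families are then diagonal. The hypothesis that the order exceeds $4$ is used only here, to ensure that Roth's taxonomy applies and that $H_1$ is accordingly of type $pg$ or $pgg$; these are the types arising for the species $1_m$, $7$, $11$, $22$, and $30$ singled out above.

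The substance is a piece of colour bookkeeping valid for any design. A diagonal isometry $g$ interchanges warp-over with weft-over cells, so $g$ alone is a (side-preserving) symmetry of the associated prefabric precisely when $g$ \emph{complements} that design, while $\tau g$ is a (side-reversing) symmetry precisely when $g$ \emph{preserves} it. Now I would apply this to the striped pattern read as the new design. Since the colouring is perfect, every element of $G_1$, and in particular each glide-reflection generating $H_1$, is a colour symmetry of the striped strands; passing to the over-strand in each cell, such a $g$ carries the striped pattern either to itself or to its complement. In the former case $\tau g$, and in the latter case $g$, is a symmetry of the new prefabric. Either way some symmetry of the new prefabric has the same underlying plane isometry as $g$, and because $\tau$ fixes every strand setwise this symmetry permutes the strands of the new prefabric exactly as $g$ permutes those of the fabric.

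Finally I would assemble the pieces. The symmetries just produced, one for each generator of $H_1$, generate a subgroup of the symmetry group of the new prefabric whose action on strands agrees with that of $H_1$---the factors of $\tau$, being central and strand-fixing, drop out of every product---and this action is transitive. Hence the new prefabric is isonemal, and with the falling-apart already in hand the theorem follows. I expect the colour bookkeeping of the third paragraph to be the only real obstacle, the delicate point being that a glide-reflection which is side-preserving for the fabric, and which therefore complements the fabric's own design, may turn out to preserve or to complement the \emph{striped} pattern according to the parities of its glide and of its axis; one has to see that in both cases the accompanying factor of $\tau$ leaves the permutation of strands untouched, so that transitivity is inherited whichever case occurs.
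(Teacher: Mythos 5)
Your proof is correct and follows essentially the same route as the paper's: reduce to showing isonemality of the resulting prefabric (falling apart being already known), show that each side-preserving glide-reflection generating $H_1$ reappears as a side-preserving or side-reversing glide-reflection of the striped pattern read as a design, and conclude transitivity from that of $H_1$ since $\tau$ fixes every strand. The only difference is organizational: where you invoke perfection to get a single global colour permutation (identity or swap) and hence a two-case dichotomy, the paper works through four explicit cases by glide parity and axis position---concreteness it then exploits in the Corollary recording which configuration yields which kind of symmetry.
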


The side-preserving subgroups $H_1$ of species $1_m$, 3, 6, $7_o$, $7_e$, and 9 are of type $pg$.
And those of species 11, 15, 19, 21, 22, 23, 29, and 30 are of type $pgg$.
$H_1$ cannot be transitive on strands in these species other than $1_m$, 7, 11, 22, and 30, for fabrics in the other species are of genus II, IV, both II and IV, or V.
So the theorem excludes the species that we know need excluding.

\begin{proof} It is known that the prefabric falls apart; it is required only to prove that it is isonemal.
The side-preserving glide-reflections that generate the side-preserving subgroup of $G_1$, transitive on strands, have even or odd glides, and their axes pass through or between the imposed redundant cells, giving four situations to consider.
They will be considered in pairs.

There are two patterns involved here.
The first, which is the design of the fabric that will be subjected to the striping, and the second, which is to be shown to be the design of an isonemal prefabric.
The first will be called the design, and the second will be called the pattern.

\noindent {\it First pair.} Each even glide-reflection with axis through the redundant cells and odd glide-reflection with axis between the redundant cells preserves the colour of the redundant cells while setting up a correspondence between

predominantly dark columns and

predominantly dark rows in which the irredundant cells have been complemented 

\noindent and between

predominantly pale columns in which the irredundant cells have been complemented and

predominantly pale rows.

\noindent In the design of the fabric on which the pattern (not yet shown to be the sort of design desired) is based, the side-preserving glide-reflections mapped the irredundant cells of (now) predominantly dark columns to irredundant cells of predominantly dark rows, their colour being reversed by the colouring convention for designs.
Their colours did not match.
In the pattern, the irredundant cells of predominantly dark columns are mapped to irredundant cells of predominantly dark rows, the colours of which rows have been reversed by the complementation.
In the pattern there is no colouring convention.
Their colours now do match.
The colours of the corresponding irredundant cells match, and the colours of the redundant cells match.
Exactly the same is true for the predominantly pale columns mapped to predominantly pale rows except that the complementation occurred in the columns instead of the rows.
This consistency means that the pattern has a colour-preserving glide-reflection for each such side-preserving glide-reflection of the fabric design.
Each such colour-preserving glide-reflection is a side-reversing glide-reflection of the pattern when it is regarded as a design.
 
\noindent {\it Second pair.} Each even glide-reflection with axis between the redundant cells and odd glide-reflection with axis through the redundant cells reverses the colour of the redundant cells while setting up a correspondence between

predominantly dark columns and

predominantly pale rows 

\noindent and between

predominantly pale columns in which the irredundant cells have been complemented and

predominantly dark rows in which the irredundant cells have been complemented.

\noindent In the design of the fabric on which the pattern is based, the side-preserving glide-reflections mapped the irredundant cells of predominantly dark columns to irredundant cells of predominantly pale rows, their colour being reversed by the colouring convention for designs.
Their colours did not match.
In the pattern, the irredundant cells of predominantly dark columns are mapped to irredundant cells of predominantly pale rows as before.
Their colours do not match.
The colours of the corresponding irredundant cells are opposite, and the colours of the redundant cells are opposite.
Exactly the same is true for the predominantly pale columns mapped to predominantly dark rows except that both have been complemented, which is immaterial to oppositeness of colour.
This consistency opposite to that of the previous paragraph means that the pattern has a colour-reversing glide-reflection for each such side-preserving glide-reflection of the fabric design.
Each such colour-reversing glide-reflection is a side-preserving glide-reflection of the pattern when it is regarded as a design.

If the pattern is now regarded as a design, it has among its symmetry operations a subgroup generated by side-preserving and/or side-reversing glide-reflections (depending on spacing of axes and placement of the redundant cells) that is transitive on the strands because the glide-reflections from which they came were transitive on the strands.
The symmetry group of the prefabric defined by the design is accordingly transitive on the strands.
And so the prefabric is isonemal as required.
\end{proof}
\begin{figure}
\centering
\noindent
\includegraphics{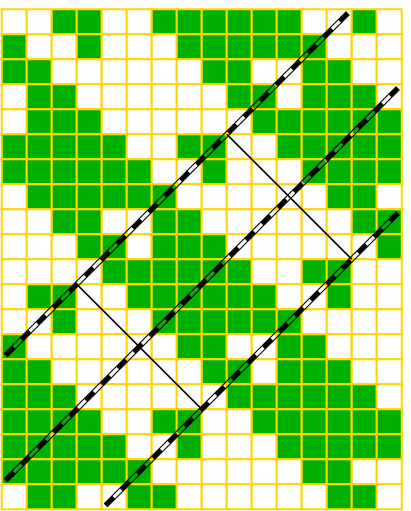}
\hskip 10 pt\includegraphics{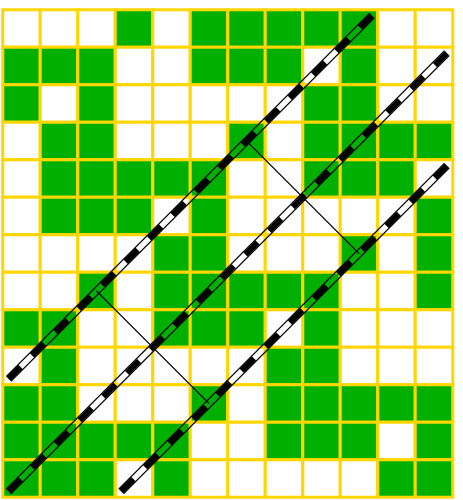}

\noindent (a)\hskip 116 pt (b)
\caption{Illustrations of fabrics of species 7 from \cite{P1}.
a. Figure 15a: subspecies $7_o$. 
\hskip 10 pt b. Figure 16a: subspecies $7_e$.}\label{fig10:}
\end{figure}

It may be helpful to consider the examples available in Figure 6:

\noindent Figure 6a, one colouring of the species-$7_o$ example of Figure 10a drops the bottom row of that figure.
The dark redundant cells begin in the bottom left corner, as does the central axis of side-preserving glide-reflection with glide $3\delta$.
Because it runs through redundant cells, it becomes the axis of side-preserving glide-reflections in the prefabric of Figure 6a.
Because the axes $2.5\delta$ distant run between redundant cells, they become axes of  side-reversing glide-reflections in Figure 6a.

\noindent Figure 6b, the other colouring, drops the first two columns of Figure 10a.
The dark redundant cells begin in the bottom left corner.
The central (side-preserving) glide-reflection axis with odd glide $3\delta$ begins on the left in the cell four up.
Because it runs between redundant cells, it becomes an axis of side-reversing glide-reflection in Figure 6b.
Because the axes $2.5\delta$ distant run through redundant cells, they become axes of side-preserving glide reflection in Figure 6b.

\noindent Figure 6c, one colouring of the species-$7_e$ example of Figure 10b drops the first column of that figure and has pale redundant cells and the central axis of (side-preserving) glide-reflection begin one cell above the bottom left corner.
Since that axis runs through redundant cells with even glide $2\delta$, it becomes the axis of side-reversing glide-reflections marked in Figure 6c.
The other marked axes, $1.5\delta$ distant and running between redundant cells, become axes of side-preserving glide-reflection in Figure 6c.

\noindent Figure 6d, the other colouring, has its bottom left corner match that of Figure 10b.
Dark redundant cells begin one cell up, and the central axis of (side-preserving) glide-reflection with glide $2\delta$ runs between redundant cells.
Accordingly, it becomes the axis of side-preserving glide-reflection in Figure 6d.
The axes $1.5\delta$ distant run through redundant cells, and so they become axes of glide-reflections that are side-reversing.

\begin{cor}{If the side-preserving subgroup of a fabric's symmetry group is generated by side-preserving glide-reflections and is transitive on strands and the fabric is of order greater than $4$ and is perfectly coloured by thinly striping strands, then the pattern
 is the design of a prefabric having a side-preserving glide-reflection where the fabric had a side-preserving glide-reflection either between redundant cells with even glide or through redundant cells with odd glide and having a side-reversing glide-reflection where the fabric had a side-preserving glide-reflection either through redundant cells with even glide or between redundant cells with odd glide.}
\end{cor}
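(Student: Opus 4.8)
The plan is to obtain the corollary directly from the case analysis already carried out in the proof of the preceding theorem, simply by re-indexing its four cases. In that proof the side-preserving glide-reflections generating the side-preserving subgroup were sorted according to two binary attributes---the parity of the glide (even or odd) and the position of the axis (through or between the imposed redundant cells)---and then grouped into the \emph{first pair} and the \emph{second pair}. So the first step is to record exactly which of the four combinations fall into each pair: the first pair consists of even glides through redundant cells together with odd glides between them, and the second pair consists of even glides between redundant cells together with odd glides through them.

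The second step is to read off from each pair the nature of the induced symmetry of the pattern. The theorem's proof showed that the first pair yields a colour-preserving glide-reflection of the coloured pattern, and that this glide-reflection, once the pattern is reinterpreted as a design, is side-reversing; and that the second pair yields a colour-reversing glide-reflection, which as a design symmetry is side-preserving. The corollary's two clauses are then precisely these two pairs: its side-preserving clause lists ``between redundant cells with even glide'' and ``through redundant cells with odd glide,'' which is exactly the second pair, while its side-reversing clause lists ``through redundant cells with even glide'' and ``between redundant cells with odd glide,'' which is exactly the first pair. Matching the descriptions then completes the argument, since by hypothesis the theorem applies verbatim---its premises (the side-preserving subgroup generated by side-preserving glide-reflections and transitive on strands, order greater than $4$, perfect colouring by thin striping) are identical to those assumed here.

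The only point requiring care---and the place where a sign could slip---is the inversion ``colour-preserving $\mapsto$ side-reversing, colour-reversing $\mapsto$ side-preserving'' in passing from the coloured pattern to the pattern regarded as a design. I would verify that this inversion is applied consistently with the convention fixed in the theorem rather than re-derive it, since the theorem has already established it; the entire content of the corollary is the bookkeeping that links the geometric data of each generating glide-reflection (its parity and the position of its axis) to the $\tau$-behaviour of its image when the pattern is read as a design. No new geometric fact is needed beyond what the theorem supplies, so the main (indeed only) obstacle is ensuring that the four-way correspondence is transcribed without transposing a case.
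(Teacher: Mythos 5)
Your proposal is correct and is exactly the paper's own justification, which consists of the single remark that the corollary ``is exactly what the mechanics of the above proof show'': you have matched the corollary's side-preserving clause to the theorem's second pair (colour-reversing, hence side-preserving as a design symmetry) and its side-reversing clause to the first pair (colour-preserving, hence side-reversing), which is the correct transcription. No further comment is needed.
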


\smallskip
This is exactly what the mechanics of the above proof show.
No similar result is true for side-reversing glide-reflections.
For this reason one might expect that thinly stripable species for which there are no side-preserving glide-reflections, namely $8_e$ (Figure 5c), 13, 17, 25, and $28_n$, in addition to those listed above as failing to produce two isonemal designs, might also fail. 
Indeed, larger examples from species $8_e$, 13, 17, 25, and $28_n$ do fail to give designs of isonemal prefabrics when striped either way, so that it is only those that have side-preserving glide-reflections that have dependably even one isonemal design as a result; they have {\it one} because it is impossible for the axes to avoid the positions covered by the Corollary.
\begin{thm}
The pattern of a fabric of order greater than $4$, perfectly coloured by thinly stiping strands, cannot be the design of an isonemal prefabric that falls apart if the symmetry group of the fabric has quarter-turn symmetry.
\end{thm}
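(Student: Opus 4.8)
The plan is to assume the pattern is the design of an isonemal prefabric and extract a contradiction from the quarter-turn structure. By the theorem characterising thin striping of quarter-turn fabrics, a fabric of order greater than $4$ with quarter-turn symmetry that is perfectly coloured by thin striping is of species $36_s$; thus its symmetry group $G_1$ is of type $p4$, its side-preserving subgroup $H_1$ is of type $p4$ generated by quarter-turns, and $G_1$ has no axis of reflection or glide-reflection. The surviving quarter-turns have centres at cell centres and carry the side-reversal $\tau$, so they are colour-\emph{preserving} on the design, and among all symmetries of the fabric only they interchange warps and wefts (translations and half-turns preserve the direction of each strand). Hence, if the pattern $P$ were transitive on strands, its symmetry group would have to contain some operation exchanging the two strand directions, and the first task is to show that no such operation survives the striping.

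The core computation tracks a quarter-turn through the colouring. On the redundant cells $P$ coincides with the fixed checkerboard $K$ set up by the striping alone (pale where both cell indices are even, dark where both are odd), whereas on the irredundant cells $P = D \oplus f$, where $D$ is the fabric's design ($1$ for warp-on-top), $f$ is the pale/dark parity of the weft through the cell, and $\oplus$ denotes complementation of $D$ in the dark rows. Let $\rho$ be the quarter-turn about the fabric's centre. Since $K$ has four-fold symmetry about a cell centre, $\rho$ preserves colour on the redundant cells. On an irredundant cell, using the with-$\tau$ relation $D\circ\rho = D$ of the fabric together with $P = D\oplus f$, one gets $P(\rho(x,y)) = D(x,y)\oplus(x \bmod 2)$ against $P(x,y) = D(x,y)\oplus(y\bmod 2)$; because $x$ and $y$ have opposite parity on an irredundant cell these differ, so $\rho$ \emph{reverses} colour there. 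An isometry that preserves colour on half the cells and reverses it on the other half is neither a colour-preserving nor a colour-reversing symmetry of $P$, so $\rho$, with or without $\tau$, is not a symmetry of the pattern. This is precisely where the hypothesis bites: the quarter-turn carries $\tau$ and so is colour-preserving, and there is no second complementation — such as the one supplied by the colour-reversing glide-reflections of the preceding theorem — to absorb the parity mismatch introduced by $f$.

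It then remains to rule out that the striping manufactures a \emph{new} warp–weft symmetry that the fabric lacks, namely a diagonal reflection or glide-reflection. The plan is to argue by descent: any symmetry of $P$ carries the rigid checkerboard $K$ to itself and hence preserves the redundant/irredundant partition, so its restriction to the irredundant cells, on which $P=D\oplus f$ records $D$ faithfully, forces a reflection- or glide-reflection relation on $D$. I would then combine this relation with the genuine identity $D\circ\rho=D$ and the requirement that $D$ be a fabric of order greater than $4$ to promote the relation to an actual axis of reflection or glide-reflection of $G_1$, contradicting the defining property of species $33$–$39$ that no such axis exists; alternatively, one may invoke the known restriction that an isonemal prefabric which falls apart is of genus II, IV, or V and check that a pattern retaining a diagonal mirror together with the half-turns inherited from the $p4$ fabric cannot lie in those genera. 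Once every warp–weft operation is excluded, the warps and the wefts of $P$ fall into separate orbits, $P$ is not transitive on strands, and so — although $P$ is always the design of a prefabric that falls apart — it is not the design of an \emph{isonemal} one.

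The hard part will be this last step. The quarter-turn computation disposes cleanly of the symmetries actually present in the fabric and displays plainly how the $\tau$-hypothesis is used, but excluding the spurious reflective symmetries is delicate because a symmetry of $P$ constrains $D$ only on the irredundant half of the cells, and a reflection relation holding on half the cells need not, by itself, contradict the $p4$ classification. Turning that partial relation into a genuine forbidden symmetry of $G_1$ — or carrying through the genus computation instead — is the part that will require the order restriction and the full rigidity of the fabric's structure.
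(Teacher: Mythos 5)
Your first two paragraphs are, in substance, the paper's own proof. The paper reaches the same conclusion by a more geometric route: since a species-$36_s$ fabric is of level~2, the vector from a lattice-unit corner to the centre of the lattice unit has one even and one odd component, so the quarter-turn centres \blbox necessarily fall in irredundant cells as well as redundant ones; a \blbox sitting in an irredundant cell has its row and its column of opposite predominant colours, so in the pattern they could only be related by a side-preserving quarter-turn \whboxx, and \whbox cannot be centred within a cell. That is the same phenomenon as your parity identity $P(\rho(x,y))=D(x,y)\oplus(x\bmod 2)$ against $P(x,y)=D(x,y)\oplus(y\bmod 2)$: the with-$\tau$ quarter-turn relates the obverse at a cell to the reverse at its image, and on irredundant cells obverse and reverse differ. (One small caveat: your checkerboard argument as written assumes the quarter-turn centre lies in a redundant cell; for a centre in an irredundant cell the roles flip --- colour preserved on irredundant cells, reversed on redundant ones --- but the mixed behaviour, hence the conclusion, is the same.)

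Where you diverge is your third paragraph, and you should know that the step you flag as the hard part --- excluding a diagonal reflection or glide-reflection manufactured by the striping that the fabric itself lacks --- is not carried out in the paper. The published proof ends at the failure of the inherited quarter-turns: ``This contradiction shows that the pattern \ldots cannot be the design of an isonemal prefabric.'' Implicitly the paper takes the view that strand-transitivity of the pattern must be witnessed by colour symmetries of the perfectly coloured fabric, and in species $36_s$ the only operations exchanging warp with weft are the quarter-turns, which you and the paper have both disposed of. If you accept that reading, your proof is complete once the third paragraph is deleted; if you do not, then the extra rigour you are reaching for is a genuine addition that the paper does not supply, and neither of your two proposed routes (promoting a partial reflection relation on the irredundant cells to a forbidden symmetry of $G_1$, or the genus computation) is executed far enough to be checked. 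Either way, the mathematics you have actually written down is correct and coincides with the paper's argument.
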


\begin{proof}  The isonemal fabric coloured, being of species $36_s$ by Theorem 2.2, is of level 2.
Because the movement from the corner of a level-2 (in the terms of \cite{P3}) lattice to its centre is an even number of cell widths in one vertical or horizontal direction and then an odd number of cell widths in a perpendicular direction, the lattice-unit corners and centres fall in both the centres of redundant cells and the centres of irredundant cells. 
The \blboxx s in irredundant cells are in predominantly pale rows and dark columns or dark rows and pale columns.
These rows and columns cannot be related by \blbox but by \whbox as in Figures 7b, c (central \blboxx ) and 8b, c (non-central \blboxx s).
But \whbox cannot occur within a cell as a symmetry.
This contradiction shows that the pattern resulting from colouring by thin striping cannot be the design of an isonemal prefabric.
What goes wrong in the example of Figures 7 and 8 always goes wrong.
\end{proof}

Theorem 3.3, together with the examples before Theorem 3.1, show the sufficient condition of Theorem 3.1 to be necessary as well as sufficient for whole species of fabrics. 
Striping individual fabrics beyond those classes may happen to produce isonemal designs.
\section{The catalogue of isonemal prefabrics that fall apart extended}
\noindent What species can be produced by thin stiping? 
To what species do isonemal designs that fall apart belong?
The latter must be of genera pure II, pure IV, or II and IV with no I, III, or V on account of the 3/4 dark, 1/4 pale feature.
These species are 3, 6, and 9 (genus II), 15 and 19 (genus IV), and 23 and 31 (genera II and IV), including none from 33--39.
The last fact corresponds to the impossibility of producing an isonemal design by thinly striping a non-exceptional isonemal fabric with quarter-turn symmetry.
All of the possible species are represented in the catalogue \cite{JA}: 12-69-2* (3), 12-69-1* (6), 16-277-4* (9), 12-21-4* ($15_o$), 16-277-2* ($15_e$), 12-69-3* ($19_o$), 16-85-3* ($19_e$), 12-21-1* ($23_o$), 8-5-1* ($23_e$), 8-5-3* (31).
It is not difficult to determine all of the isonemal designs of prefabrics that fall apart of any species and order by the method indicated in \cite[Section 5]{P1} using the data collected in \cite[Section 8]{P1} and \cite[Section 12]{P2}.

To take the most obvious case, order $20=2ab$ with $a=5$ and $b=2$ for species 3 and 6 and with $a=1$ and $b=10$ for 6 alone, and $20=4ab$ with $a=1$ and $b=5$ for species $15_o$, $19_o$, and $23_o$ and with $a=5$ and $b=1$ for $23_o$.
Nothing allows species 9 or 31 at order 20.
The lattice units for the 9 possibilities are shown in Figure 11.
\begin{figure}
\centering
\includegraphics{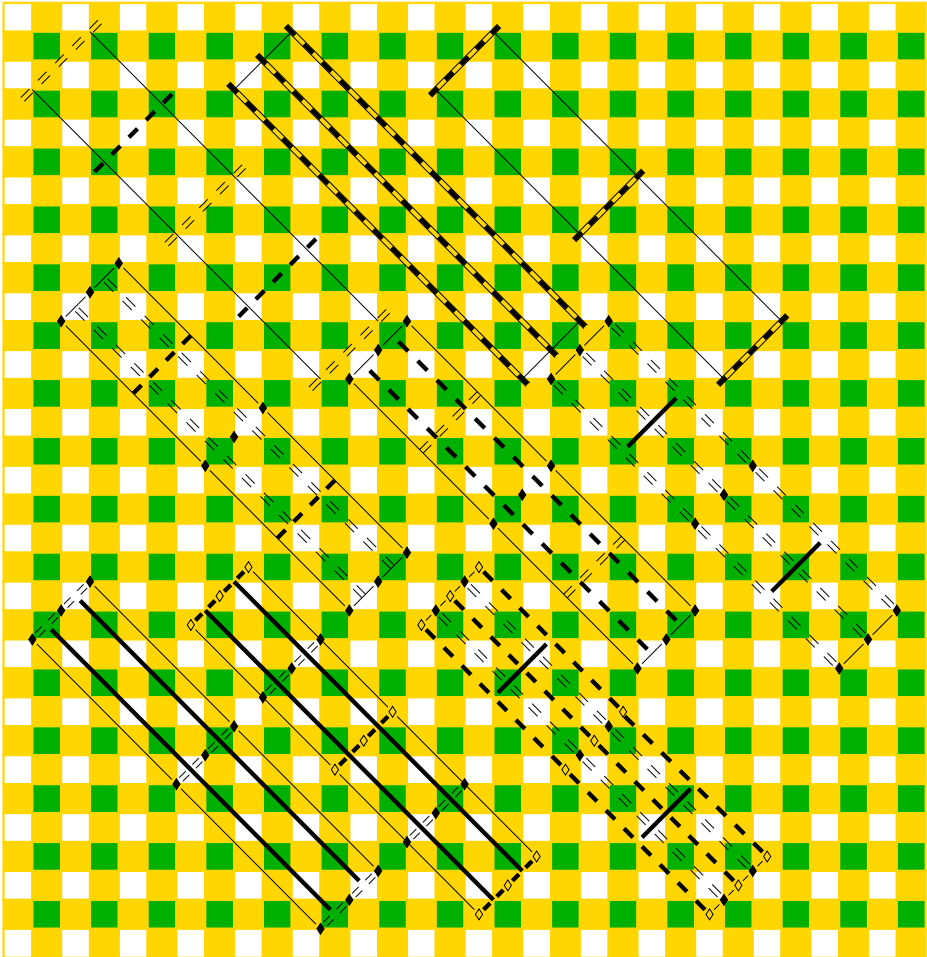}
\caption{The lattice units of the species of prefabrics of order 20 and genus II, IV, or II and IV that fall apart fitted into the matrix of redundant cells, indicated dark and pale rather than neutral. Row 1. a. Species 3.\hskip 10 pt b and c. Species 6.\hskip 10 pt 
Row 2. d and e. Species $15_o$.\hskip 10 pt f. Species $19_o$ one way.\hskip 10 pt Row 3. g. Species $19_o$ the other way.\hskip 10 pt h and i. Species $23_o$.}\label{fig11:}
\end{figure}
One has to fit the axes and centres of the group to the lattice of redundant cells.
For example, for species 3 axes of side-preserving glide-reflection with odd glide would have to lie through redundant cells and the side-reversing glide-reflection axes through irredundant cells, but if the glides are even as in Figure 11a, then the axes of side-preserving glide-reflection must pass instead through {\it irredundant} cells and the axes of side-reversing glide-reflection through the {\it redundant} cells.
That these arrangements are possible is due to the distance in $\delta$ between neighbouring axes, half an odd multiple of $\delta$.
The usual nuisance occurs by which the smaller-order 4-1-1* appears at all nine opportunities because the symmetry groups examined are subgroups of that of 4-1-1*.
And then because of the smallness of one dimension of the lattice units used and because of the extra symmetry imposed by the redundant cells, each set of designs produced contains other designs with more symmetry and therefore of a different species than that intended.
In particular, species 3 (Figure 11a), the second set from species 6 (Figure 11c), the first set from species 15 (Figure 11d), and the second set from species 19 (Figure 11g) all contained the first set of designs of species 23 (Figure 18), and the second set from species 6 (Figure 11c), the second set from species 15 (Figure 11e) and the first set from species 19 (Figure 11f) contained the second set of designs of species 23 (Figure 19).
The first set for species 6 (Figure 11b), in fact, contained nothing but designs that were not of species 6.
\begin{figure}
\centering
\includegraphics{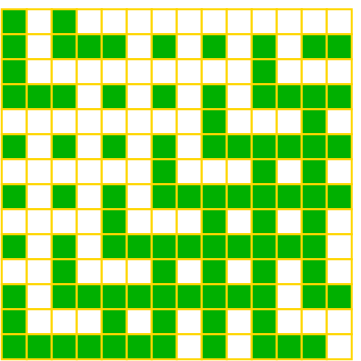}\hskip 5 pt \includegraphics{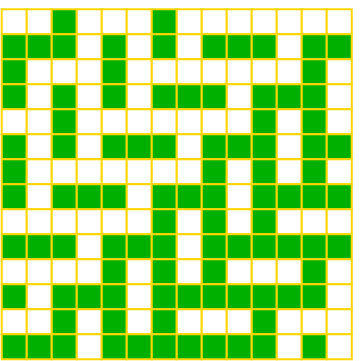}\hskip 5 pt \includegraphics{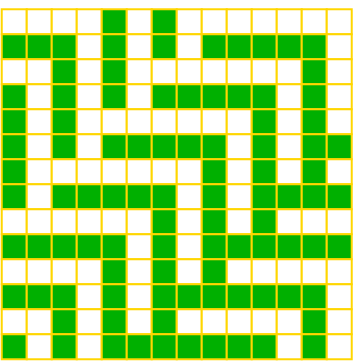}

1109-1* \hskip 65pt 4373-1* \hskip 65pt 5141-1*
\vskip 5pt
\includegraphics{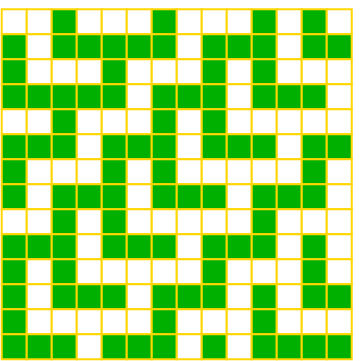}\hskip 5 pt \includegraphics{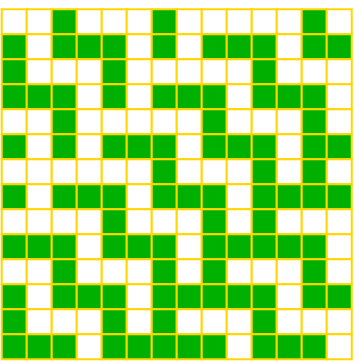}\hskip 5 pt \includegraphics{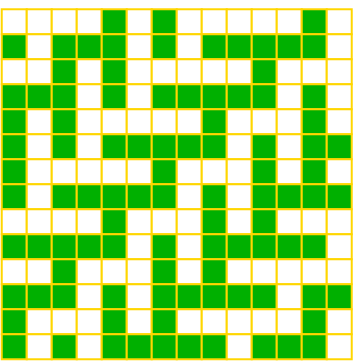}

17477-1* \hskip 65pt 17489-1* \hskip 65pt 17669-1*
\caption{The 6 isonemal prefabrics of species 3 and order 20 that fall apart.}\label{fig12:}
\end{figure}
The appropriate results from each species are displayed.
Figures 12 (6 designs of species 3) and 13 (6 designs of species 6 of the second kind shown in Figure 11c) have the peculiarity that, lacking all rotational symmetry, they are an arbitrary choice of which way the design is displayed. 
The same prefabric could have its axes rotated from the displayed positions through any multiple of $90^\circ$ including upside down to produce four genuinely different pictures. 
The other prefabrics look different from the given diagram when rotated $90^\circ$ either way but the same upside down so that there are only two genuinely different pictures of each pattern in Figures 14--19.
\begin{figure}
\centering
\includegraphics{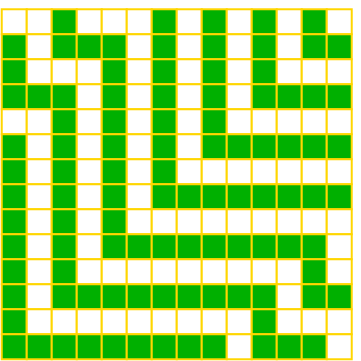}\hskip 5 pt \includegraphics{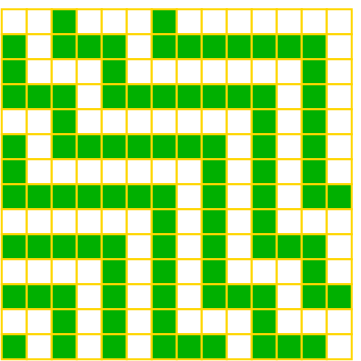}\hskip 5 pt \includegraphics{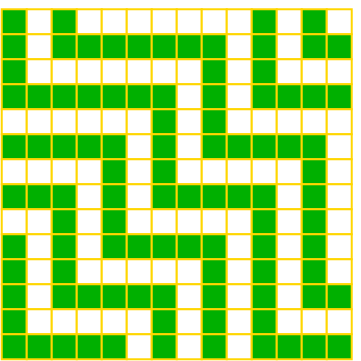}

1109-2* \hskip 65pt 4373-2* \hskip 65pt 5141-2*
\vskip 5pt
\includegraphics{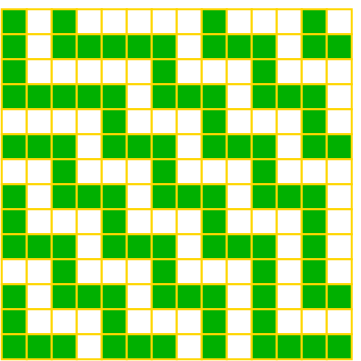}\hskip 5 pt \includegraphics{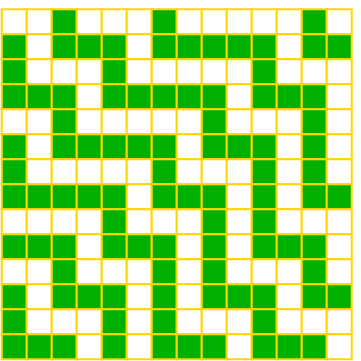}\hskip 5 pt \includegraphics{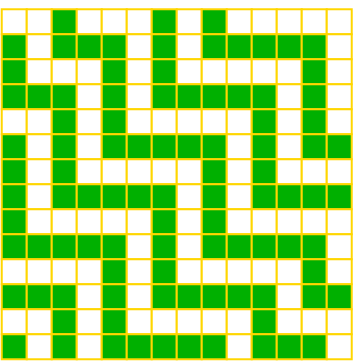}

\noindent 17477-2* \hskip 65pt 17489-2* \hskip 65pt 17669-2*
\caption{The 6 isonemal prefabrics of species 6 and order 20 that fall apart (mirrors of positive slope $5\delta$ apart).}\label{fig13:}
\end{figure}
\begin{figure}
\centering
\includegraphics{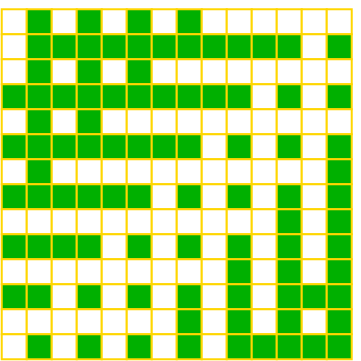}\hskip 5 pt \includegraphics{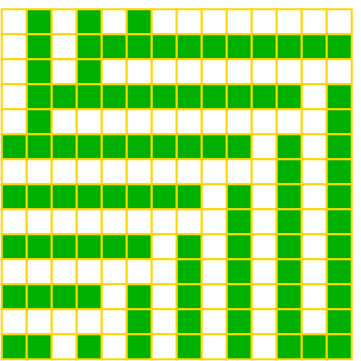}\hskip 5 pt \includegraphics{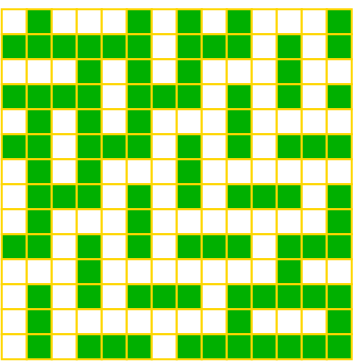}

341-1* \hskip 65pt 341-2* \hskip 65pt 4433-1*
\vskip 5pt
\includegraphics{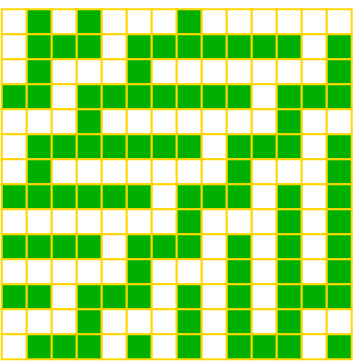}\hskip 5 pt \includegraphics{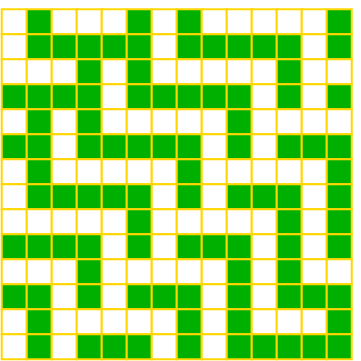}\hskip 5 pt \includegraphics{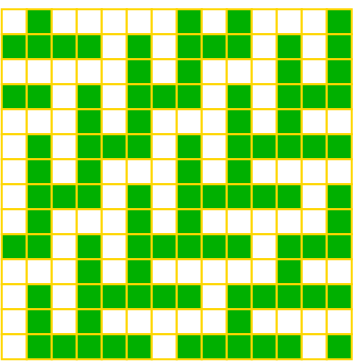}

4433-2* \hskip 65pt 16709-1* \hskip 65pt 16709-2*
\caption{The 6 isonemal prefabrics of species $15_o$ and order 20 that fall apart (side-preserving glide-reflection axes with negative slope $\delta$ apart).}\label{fig14:}
\end{figure}
\begin{figure}
\centering

\includegraphics{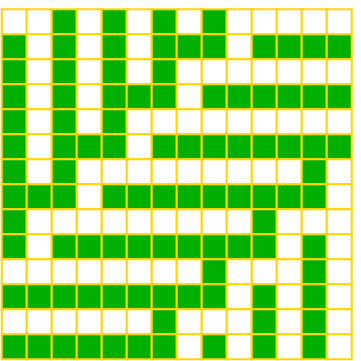}\hskip 5 pt \includegraphics{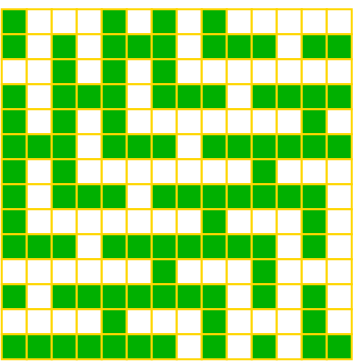}\hskip 5 pt \includegraphics{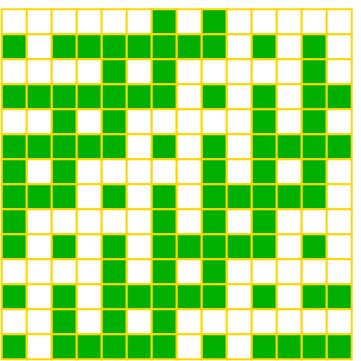}

1109-3* \hskip 65pt 4373-3* \hskip 65pt 5141-3*
\vskip 5pt
\includegraphics{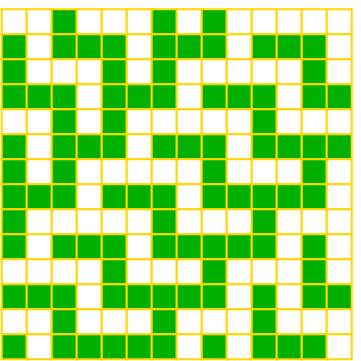}\hskip 5 pt \includegraphics{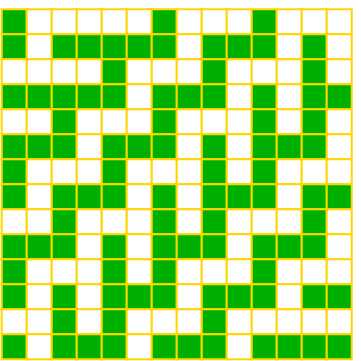}\hskip 5 pt \includegraphics{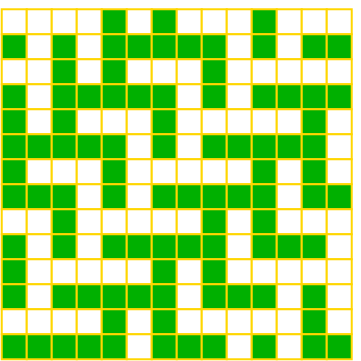}

17477-3* \hskip 65pt 17489-3* \hskip 65pt 17669-3*

\caption{The 6 isonemal prefabrics of species $15_o$ and order 20 that fall apart (side-preserving glide-reflection axes with positive slope $5\delta$ apart).}\label{fig15:}
\end{figure}
\begin{figure}
\centering

\includegraphics{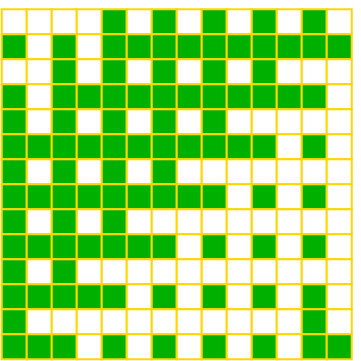}\hskip 5 pt \includegraphics{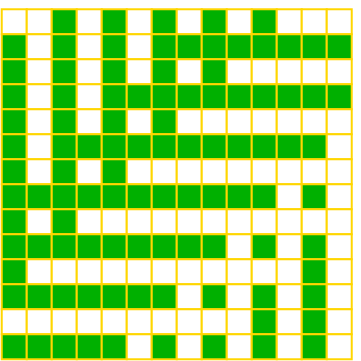}\hskip 5 pt \includegraphics{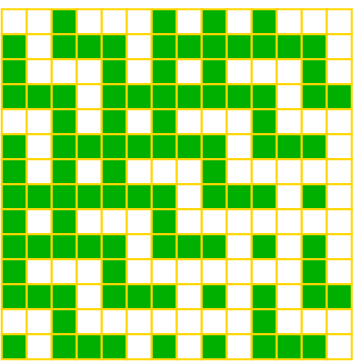}

341-3* \hskip 65pt 341-4* \hskip 65pt 4433-3*
\vskip 5pt
\includegraphics{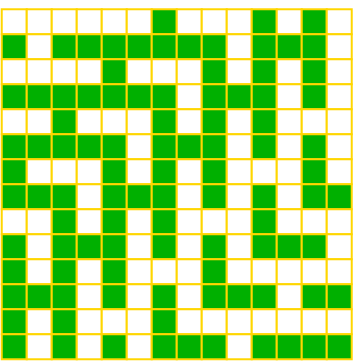}\hskip 5 pt \includegraphics{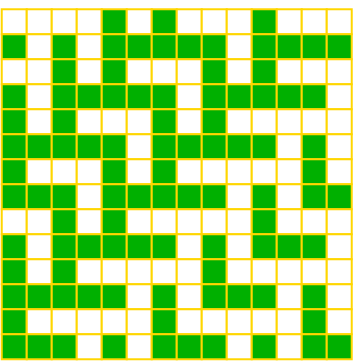}\hskip 5 pt \includegraphics{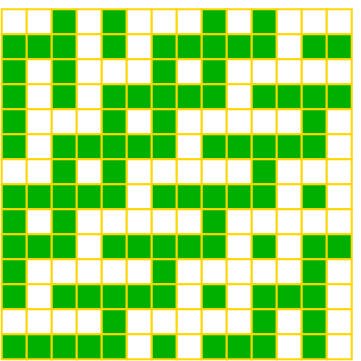}

4433-4* \hskip 65pt 16709-3* \hskip 65pt 16709-4*

\caption{The 6 isonemal prefabrics of species $19_o$ and order 20 that fall apart (mirrors with positive slope $5\delta$ apart).}\label{fig16:}
\end{figure}
\begin{figure}
\centering
\includegraphics{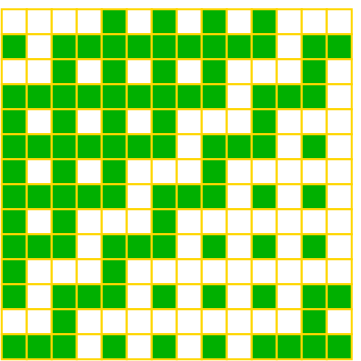}\hskip 5 pt \includegraphics{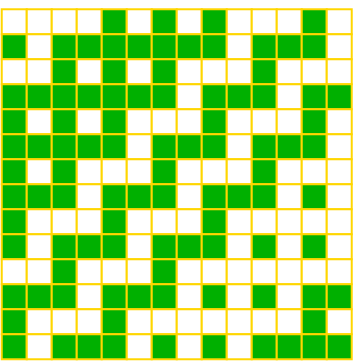}\hskip 5 pt \includegraphics{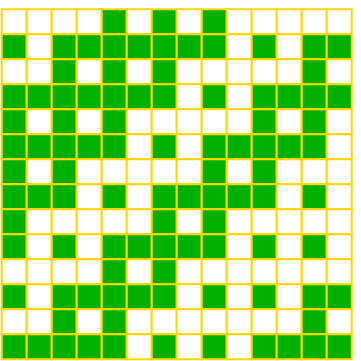}

1109-4* \hskip 65pt 4373-4* \hskip 65pt 5141-4*
\vskip 5pt
\includegraphics{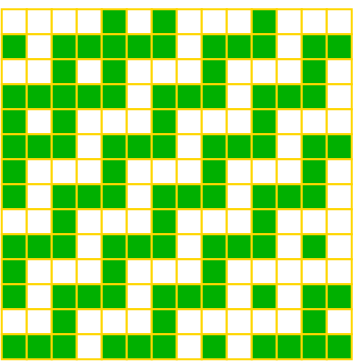}\hskip 5 pt \includegraphics{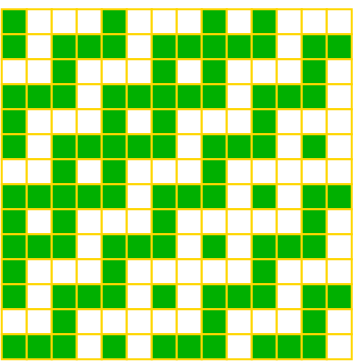}\hskip 5 pt \includegraphics{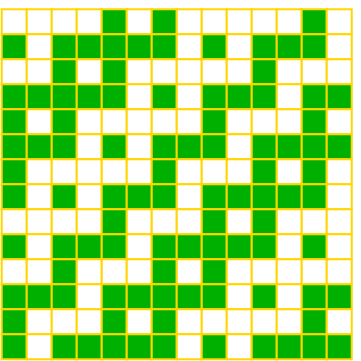}

17477-4* \hskip 65pt 17489-4* \hskip 65pt 17669-4*
\caption{The 6 isonemal prefabrics of species $19_o$ and order 20 that fall apart (mirrors with negative slope $\delta$ apart).}\label{fig17:}
\end{figure}
\begin{figure}
\centering

\includegraphics{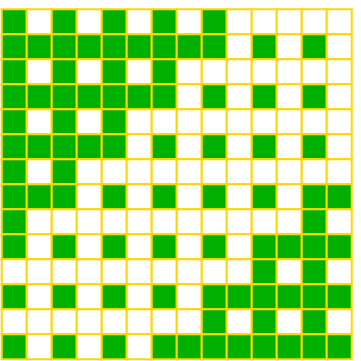}\hskip 5 pt \includegraphics{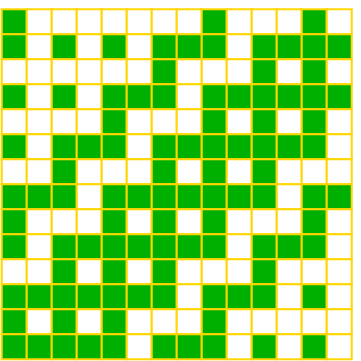}\hskip 5 pt \includegraphics{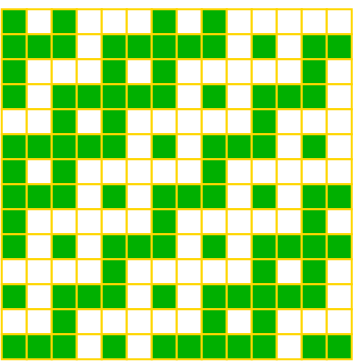}\\

341-5* \hskip 65pt 4433-5* \hskip 65pt 16709-5*
\caption{The 3 isonemal prefabrics of species $23_o$ and order 20 that fall apart (mirrors with negative slope $\delta$ apart).}\label{fig18:}
\end{figure}
\begin{figure}
\centering

\includegraphics{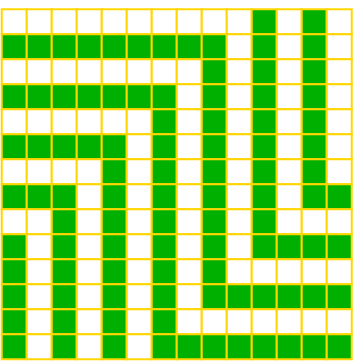}\hskip 5 pt \includegraphics{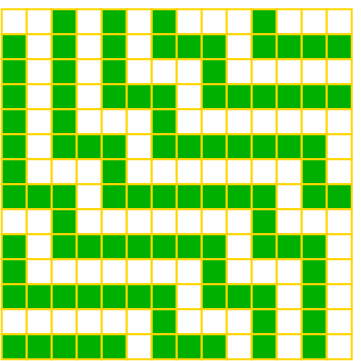}\hskip 5 pt \includegraphics{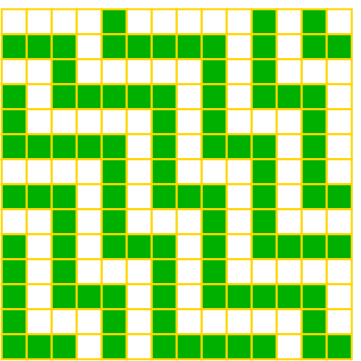}

341-6* \hskip 65 pt 4433-6* \hskip 65 pt 16709-6*

\caption{The 3 isonemal prefabrics of species $23_o$ and order 20 that fall apart (mirrors with positive slope $5\delta$ apart).}\label{fig19:}
\end{figure}

Collectively Figures 12--19 extend the catalogue of isonemal prefabrics that fall apart in \cite{JA} to order 20 for those whose alternate warps and wefts lift off (as though thinly striped, even genus) but not those where warps and wefts lift off in adjacent pairs (as though thickly striped, genus V).

The 42 patterns/designs are based on only 9 sequences.
The 3 palindromes appear 6 times each, and the 6 non-palindromes 4 times.
\section{Thin striping and falling apart}
\noindent Can designs of isonemal prefabrics that fall apart always be produced by thinly striping isonemal fabrics?
This question was answered in the affirmative in \cite{JA}, but the answer depended on an algorithm for producing a fabric for colouring by thin striping that does not always work.
If one wants to go from the design of a prefabric that falls apart to the design of a fabric that would colour by striping to look like it, one will obviously reverse the colours of the irredundant cells in the predominantly dark rows and not those in the predominantly pale rows precisely so that the striping would give the starting design.
The only question really is what to do with the redundant cells, and the proposed algorithm made a wrong choice of making them all pale.
Since a number of symmetry groups require side-preserving---hence colour-reversing---glide reflections and half-turns, that cannot be done consistently, and the right thing to do is to colour them orbit by orbit in accordance with the requirements of the relevant symmetry group.
This works in lots of examples, helped by the fact that side-preserving glide-reflections with odd glides and axes through redundant cells or even glides and axes through irredundant cells and side-reversing glide-reflections with odd glides and axes through irredundant cells or even glides and axes through redundant cells in the given design turn into side-preserving glide-reflections of what were irredundant cells on account of the colour reversals in the predominantly dark rows.
But there seems to be no reason to think that every design so produced needs to be isonemal.

A way in which it can fail to be isonemal is illustrated in Figure 20 by an isonemal prefabric of species 6 that falls apart (a) and a specification (b) of the irredundant cells of a partial design for a fabric that could produce the design (a) by thin striping.
\begin{figure}
\centering

\includegraphics{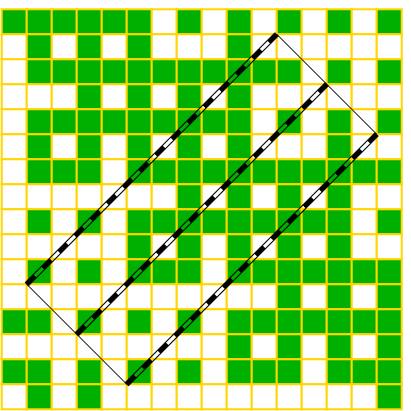}\hskip 10 pt \includegraphics{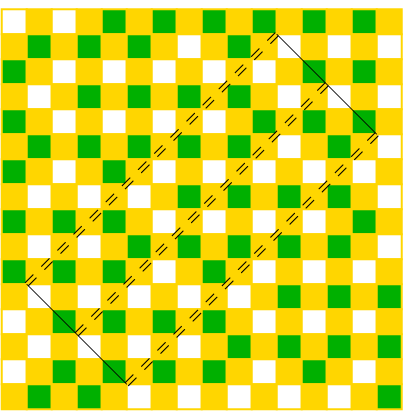}

(a) \hskip 110 pt (b)
\caption{a. Two $G_1$ lattice units = One $H_1$ lattice unit of an order-40 isonemal prefabric of species 6 that falls apart.\hskip 10 pt
b. Partial construction of a fabric that could be coloured to look like the design of (a). Dark and pale cells as required before striping, formerly redundant cells neutral.}\label{fig20:}
\end{figure}
The partial design has been produced by reversing the thin-striping algorithm for irredundant cells, i.e., the irredundant cells in the predominantly dark rows were complemented. 
The appearance of the partial construction of a fabric to colour, a pseudofabric because of the missing strands belonging in cells coloured neutral in Figure 20b, suggests that it should be a twillin with the same row-to-row offset 9 as the prefabric in Figure 20a but without the complementation that is a feature of the row-to-row translation that is a symmetry of the prefabric.
It does still have the glide-reflection of the prefabric although the complementing of alternate rows has destroyed the mirror symmetry.
While it looks like a simple matter to colour the formerly redundant cells in keeping with the glide-reflection symmetry, doing just that to a single cell reveals that any fabric thereby produced is not isonemal.
Consider the bottom cell on the central axis within the marked lattice units (dark in Figure 20a, neutral in Figure 20b).
Let its colour be $A$.
Five cells along that axis, the cell, again neutral in Figure 20b, must be $\overline{A}$ because the glide-reflection is side-preserving.
Four cells downward and to the right, the neutral cell must have colour $\overline{A}$ by the periodicity of both diagrams in that direction.
But that cell is the image of the initial cell with colour $A$ under the row-to-row translation with offset 9 that is the row-to-row symmetry of the formerly redundant cells of Figure 20b.
Such a row-to-row translation cannot be a symmetry of any completed construction of the fabric.
A glance at the lattice unit of the symmetry group of the irredundant cells plus the two cells coloured $A$ and $\overline{A}$ in it confirms by its dimensions, $4\delta\times10\delta$, that it is not a lattice unit of an isonemal fabric, since its parameters violate the isonemal constraint.
To be of species $1_m$, the width in $\delta$ units would have to be odd and relatively prime to 10.

In order to be consistent with the side-preserving glide-reflections of the irredundant cells with axes along the former axes, the redundant cells along those axes must be filled in with each set of five consecutive cells the colour complement of the previous set of five consecutive cells.
Other formerly redundant cells too need to be filled in in accordance with the group to make a design with group of Roth type $1_m$.
One observes that irredundant cells are invariant under translations of multiples of $5\delta$ parallel to the axes as well as glide-reflections with glide $5\delta$, but the redundant cells containing the axes are invariant under translations of only multiples of $10\delta$ along the axes because moving $5\delta$ they are complemented.
The period rectangle (lattice unit) must be $4\delta$ by $10\delta$, and so, no matter how the rest of the redundant cells are filled in, the period rectangle cannot be made smaller.
The species-6 design is isonemal because the lattice unit of $G_1$ is $4\delta$ by  $5\delta$, 4 and 5 being relatively prime.
The design partly produced cannot be isonemal because it is shaping up to be of species $1_m$ with lattice unit being $4\delta$ by $10\delta$. 
The standard isonemality constraint of \cite{P1} is violated.
(The translations under which it is invariant having those sizes in perpendicular directions, there is no transformation from strand to adjacent strand, as can be seen for the irredundant cells of, for example, the bottom two rows of Figure 20a.)

We must conclude that Theorem 2 of \cite{JA} is false and that the algorithm on which it is based does not always work either as stated there or improved here.
The improved algorithm works when it can work, but the counterexample of Figure 20 shows that no procedure can always work.


\ack
{Work on this material has been done at home and at Wolfson College, Oxford.
Richard Roth helped with the understanding of his papers.
Will Gibson made it possible for me to draw the diagrams with surprising ease from exclusively keyboard input. 
To them both and Wolfson College I make grateful acknowledgement.}


\end{document}